\DeclareMathAlphabet{\curly}{U}{rsfs}{m}{n}  
\newtheorem*{theoremA}{Theorem A}
\newtheorem*{theoremB}{Theorem B}
\newtheorem{theorem}{Theorem}[section]
\newtheorem{lemma}{Lemma}[section]
\newtheorem{proposition}[lemma]{Proposition}
\theoremstyle{definition}
\theoremstyle{problem}
\newtheorem{problem}[theorem]{Problem}
\numberwithin{equation}{section}
\newcommand{\ZZ}{{\mathbb Z}}
\newcommand{\RR}{{\mathbb R}}
\renewcommand{\pmod}[1]{\allowbreak\mkern7mu({\operator@font mod}\,\,#1)}
\newcommand{\be}{\begin{equation}}
\newcommand{\ee}{\end{equation}}
\renewcommand{\le}{\leqslant}
\renewcommand{\leq}{\leqslant}
\renewcommand{\ge}{\geqslant}
\renewcommand{\geq}{\geqslant}
\begin{document}

\title[Lagrange-like spectrum of perfect additive complements]
{Lagrange-like spectrum of perfect additive complements}

\author{Bal\'{a}zs B\'{a}r\'{a}ny$^1$}
\address{$^1$Department of Stochastics, Institute of Mathematics, Budapest University of Technology and Economics, M\H{u}egyetem rkp. 3., H-1111 Budapest, Hungary}
\email{balubs@math.bme.hu}

\author{Jin-Hui Fang$^2$}
\address{$^2$School of Mathematical Sciences, Nanjing Normal University, Nanjing 210023, PR China}
\email{fangjinhui1114@163.com}

\author{Csaba S\'{a}ndor$^3$}
\address{$^3$Department of Stochastics, Institute of Mathematics, Budapest University of Technology and Economics, M\H{u}egyetem rkp. 3., H-1111 Budapest, Hungary,}
\address{  Department of Computer Science and Information Theory, Budapest University of Technology and Economics, M\H{u}egyetem rkp. 3., H-1111 Budapest, Hungary, MTA-BME Lend\"ulet Arithmetic Combinatorics Research Group,
  ELKH, M\H{u}egyetem rkp. 3., H-1111 Budapest, Hungary .}
\email{csandor@math.bme.hu}

\thanks{3 Corresponding author.}
\thanks{J.H Fang is supported by the National Natural Science Foundation of China, Grant No. 12171246 and the Natural Science Foundation of Jiangsu Province, Grant No. BK20211282. B. B\'ar\'any acknowledges support from the grant NKFI FK134251 and K142169. Cs. S\'andor was supported by the NKFIH Grants No. K129335. B. B\'ar\'any and Cs. S\'andor was supported by the grant NKFI KKP144059 "Fractal geometry and applications".}
\keywords{additive complements, Lagrange spectrum, Lebesgue-measure}
\subjclass[2010]{Primary 11B34, Secondary 11J06}
\date{\today}%

\begin{abstract}

Two infinite sets $A$ and $B$ of non-negative integers are called \emph{perfect additive complements of non-negative integers}, if every non-negative integer can be uniquely expressed as the sum of elements from $A$ and $B$. In this paper, we define a Lagrange-like spectrum of the perfect additive complements ($\mathfrak{L} $ for short). As a main result, we obtain the smallest accumulation point of the set $\mathfrak{L} $ and prove that the set $\mathfrak{L} $ is closed. Other related results and problems are also contained.

\end{abstract}

\maketitle

\section{\bf Introduction}

Let $\mathbb{Z}$ be the set of integers. For nonempty sets $A,B$ of integers and an integer $n$, let $r_{A,B}(n)$ be the number of representations of $n$ as $a+b$, where $a\in A$ and $b\in B$. Two infinite sets $A$ and $B$ of non-negative integers are called \emph{perfect additive complements of non-negative integers}, if $r_{A,B}(n)=1$ for every non-negative integer $n$. For a non-negative integer $m$, denote by $\mathbb{Z}_{\ge m}$ the set of non-negative integers no less than $m$. For simplicity, we also denote $\ZZ_{\geq1}$ by $\ZZ^+$.\vskip1mm

In \cite{FangSandor}, Fang and S\'andor characterized \emph{the perfect additive complements $A,B$ of non-negative integers}.\vskip1mm

\begin{theoremA}\cite[Theorem 1.1]{FangSandor} The infinite sets $A,B$ of the non-negative integers form perfect additive complements if and only if
\begin{equation}\label{05101}
	\begin{split}
A&=\{\epsilon_0+\epsilon_2m_1m_2+\dots+\epsilon_{2k-2}m_1\dots m_{2k-2}+\dots:\epsilon_{2i}=0,1,\dots,m_{2i+1}-1\}\text{ and }\\
B&=\{\epsilon_1m_1+\epsilon_3m_1m_2m_3+\dots+\epsilon_{2k-1}m_1\dots m_{2k-1}+\dots:\epsilon_{2i-1}=0,1,\dots,m_{2i}-1\}
\end{split}
\end{equation}(or $A,B$ interchanged), where $m_i\in \mathbb{Z}_{\ge 2}$ for every $i\in\ZZ^+$.
\end{theoremA}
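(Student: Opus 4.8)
The plan is to prove the two implications separately; the forward (``only if'') implication is where essentially all of the difficulty lies, while the reverse (``if'') implication is a routine fact about variable‑base numeral systems.

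For the ``if'' direction, write $P_0=1$ and $P_j=m_1m_2\cdots m_j$ for $j\ge 1$. The sets in \eqref{05101} are exactly the collections of sums $\sum_{j\ge 0}\epsilon_jP_j$ with $0\le\epsilon_j\le m_{j+1}-1$, where the index $j$ runs over the even integers (for $A$) and over the odd integers (for $B$). From the telescoping identity $\sum_{j=0}^{l-1}(m_{j+1}-1)P_j=P_l-1$ one gets, by a standard greedy induction, that every $n\in\mathbb{Z}_{\ge 0}$ has exactly one representation $n=\sum_{j\ge 0}\epsilon_jP_j$ with $0\le\epsilon_j\le m_{j+1}-1$ and only finitely many $\epsilon_j$ nonzero. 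Grouping the even‑indexed and the odd‑indexed digits then yields the unique decomposition $n=a+b$ with $a\in A$, $b\in B$, so $r_{A,B}(n)=1$ for all $n$; both sets are infinite since every $m_i\ge 2$.

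For the ``only if'' direction, suppose $A,B$ are perfect additive complements. Since $r_{A,B}(0)=1$ forces $0\in A\cap B$, and since $1=a+b$ has a unique solution, exactly one of $A,B$ contains $1$; interchanging $A$ and $B$ if necessary (this is the alternative in the statement), assume $1\in A$. Because $B$ is infinite it has a positive element, and any positive element of $B$ must avoid $A$, so $A\ne\mathbb{Z}_{\ge 0}$; let $m:=m_1\ge 2$ be the least positive integer missing from $A$. Inspecting representations of integers in $[0,m]$ gives $\{0,1,\dots,m-1\}\subseteq A$, that $B$ meets $[1,m-1]$ in nothing, and that $m\in B$ (indeed $m$ is the least positive element of $B$). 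The key structural claim, which I would prove by strong induction on $k\ge 0$, is: each block $[km,(k+1)m)$ is either disjoint from $A$ or contained in $A$, and $B\cap[km,(k+1)m)\subseteq\{km\}$. Granting it, $B\subseteq m\mathbb{Z}_{\ge 0}$ and $A=\{0,1,\dots,m-1\}+mA'$ with $A'=\{k:[km,(k+1)m)\subseteq A\}$, $B=mB'$ with $B'=\{k:km\in B\}$; plugging into $\bigl(\sum_{a\in A}x^a\bigr)\bigl(\sum_{b\in B}x^b\bigr)=\frac{1}{1-x}$ and cancelling $1+x+\dots+x^{m-1}=\frac{1-x^m}{1-x}$ shows that $A',B'$ are again perfect additive complements (infinite, containing $0$, and now with $1\in B'$ rather than in $A'$). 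Iterating produces $m_1,m_2,\dots\in\mathbb{Z}_{\ge 2}$ with $A=\{0,\dots,m_1-1\}+m_1m_2\{0,\dots,m_3-1\}+\cdots$ and $B=m_1\{0,\dots,m_2-1\}+m_1m_2m_3\{0,\dots,m_4-1\}+\cdots$; since $m_1\cdots m_k\to\infty$, every element of $A$ and of $B$ is already accounted for after finitely many steps, which is precisely \eqref{05101}.

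The induction proving the structural claim is the main obstacle. In the inductive step one knows that $A\cap[0,km)$ is a union of full blocks and $B\cap[0,km)$ a set of multiples of $m$; moreover, since any representation of an integer $<km$ uses only summands $<km$, the associated quotient sets $S=\{s<k:[sm,(s+1)m)\subseteq A\}$ and $T=\{t<k:tm\in B\}$ have the property that each of $0,1,\dots,k-1$ equals $s+t$ for exactly one pair $(s,t)\in S\times T$. Counting representations of $km,km+1,\dots,km+m-1$ then determines $A$ and $B$ on $[km,(k+1)m)$: with $N_k:=\#\{(s,t)\in S\times T:s+t=k\}$ one finds $N_k\in\{0,1\}$; if $N_k=1$ then $A$ and $B$ both miss the whole block, and if $N_k=0$ then exactly one of ``$km\in A$'' (which forces the block into $A$) or ``$km\in B$'' (which forces the block out of $A$) holds. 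The one genuinely delicate point is to rule out a stray $km+\rho\in B$ with $0<\rho<m$ in the case $km\in A$: there one uses $m\in B$ to write both $(k+1)m=km+m$ and $(k+1)m=(m-\rho)+(km+\rho)$, two distinct representations, contradicting $r_{A,B}((k+1)m)=1$. I do not expect the ``if'' direction or the passage to the limit to present any real difficulty.
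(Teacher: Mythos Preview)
The paper does not itself prove Theorem~A; it is quoted from \cite{FangSandor} as background, so there is no in-paper proof to compare against. Your argument must therefore be judged on its own merits.

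On those merits it is correct. The ``if'' direction is the standard mixed-radix uniqueness and needs no further comment. For the ``only if'' direction your recursive scheme is sound: from $r_{A,B}(0)=r_{A,B}(1)=\cdots=r_{A,B}(m)=1$ you correctly extract $\{0,\dots,m-1\}\subseteq A$, $B\cap[1,m-1]=\emptyset$, and $m\in B$; the block structural claim then follows from the representation count
\[
r_{A,B}(km+\rho)=N_k+[km+\rho\in A]+\bigl|B\cap[km,km+\rho]\bigr|,
\]
and the one nontrivial case (ruling out $km+\rho\in B$ when $N_k=0$ and $km\in A$) is handled by your double-representation argument at $(k+1)m$, which in fact works for each $0<\rho<m$ independently. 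The passage to the quotient pair $(A',B')$ via the formal identity $\bigl(\sum_{a\in A}x^a\bigr)\bigl(\sum_{b\in B}x^b\bigr)=\tfrac{1}{1-x}$ (or, equivalently, by noting $r_{A',B'}(n)=r_{A,B}(mn)$) and the iteration with $m_1\cdots m_k\to\infty$ are routine. This is the natural proof, and one would expect the argument in \cite{FangSandor} to proceed along similar lines.
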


Let $S$ be a set of non-negative integers. Its counting function is defined by $S(x)=|S\cap [0,x]|$ for every $x\in \mathbb{Z}_{\ge 0}$. It is easy to see that if $A,B\subseteq \mathbb{Z}_{\ge 0}$ form perfect additive complements then $A(x)B(x)\ge x+1$ for every non-negative integer $x$. In particular, Fang and S\'andor showed that $\displaystyle \liminf_{x\to \infty}\frac{A(x)B(x)}{x}=1$, see \cite[Theorem~1.5]{FangSandor}.
Recently, Ma \cite{Ma} determined the $\displaystyle \limsup_{x\to \infty}\frac{A(x)B(x)}{x}$ for the sets $A$ and $B$ with the form \eqref{05101}.\vskip1mm

\begin{theoremB}\cite[Lemma 2.1]{Ma} Let $m_1,m_2,\dots$ be arbitrary integers no less than two. Then the sets $A$ and $B$ with the form \eqref{05101} are perfect additive complements of non-negative integers such that
\begin{eqnarray*}\label{05104}
\limsup_{x\rightarrow\infty}\frac{A(x)B(x)}{x}=\limsup_{k\rightarrow\infty}\frac{2}{1+D_k},
\end{eqnarray*}
where
\begin{equation}\label{eq:defDk}
D_k=\frac{1}{m_k}-\frac{1}{m_km_{k-1}}+\frac{1}{m_km_{k-1}m_{k-2}}-\dots
+(-1)^{k-1}\frac{1}{m_km_{k-1}\dots m_1}.
\end{equation}
\end{theoremB}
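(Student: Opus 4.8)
The plan is to compute $A(x)B(x)$ explicitly at the ``corners'' of the block structure coming from the mixed-radix expansion \eqref{05101}, and then optimize over $x$. First I would recall that the digits $\epsilon_0,\epsilon_1,\epsilon_2,\dots$ run over ranges of sizes $m_1,m_2,m_3,\dots$ respectively, so that the partial products $P_k := m_1m_2\cdots m_k$ (with $P_0:=1$) are exactly the sizes of the ``base-changing'' blocks. Concretely, $A$ consists of those non-negative integers whose mixed-radix digits in odd positions vanish, and $B$ of those whose digits in even positions vanish; every $n$ has a unique expansion $n = \sum_i \epsilon_i P_i$ with $0\le \epsilon_i < m_{i+1}$, and the splitting $n=a+b$ with $a\in A$, $b\in B$ just separates the even-indexed from the odd-indexed digits. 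From this one reads off that $A(P_k-1)$ and $B(P_k-1)$ are products of the relevant $m_i$'s: if $k$ is even, $A(P_k-1)=m_1m_3\cdots m_{k-1}$ and $B(P_k-1)=m_2m_4\cdots m_k$ (so $A(P_k-1)B(P_k-1)=P_k$), and symmetrically for $k$ odd. So at these points the ratio $\frac{A(x)B(x)}{x}$ is essentially $1$; the $\limsup$ must be sought at intermediate $x$.

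The key step is therefore to maximize $\frac{A(x)B(x)}{x}$ over $x$ in a single block, say $P_{k-1}\le x < P_k$. On such a block, exactly one of $A$ or $B$ has already ``frozen'' its count at $A(P_{k-1}-1)$ or $B(P_{k-1}-1)$ — whichever corresponds to the parity of $k$ — while the other grows stepwise as $x$ increases by multiples of $P_{k-1}$. A short computation shows that within the block, $\frac{A(x)B(x)}{x}$ is maximized right after the frozen set takes its last jump, i.e. essentially at $x$ slightly below $P_k$ on the ``growing'' side but with the other count still at its value from the previous corner; tracking this carefully across the recursive block structure is what produces the alternating sum. Writing $y$ for the position within the block normalized by $P_{k-1}$, one gets that the relevant ratio is of the form $\frac{2}{1+D_k}$ where $D_k$ telescopes as in \eqref{eq:defDk}: the recursion $D_k = \frac{1}{m_k}(1 - D_{k-1})$ with $D_0 = 1$ (equivalently $D_1 = 1/m_1$) unwinds to exactly the stated alternating sum. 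The factor $2$ and the shape $\frac{2}{1+D_k}$ come from comparing the product of the two counts — one at the ``new'' corner scale, one lagging one level behind — against $x\approx \tfrac{1}{2}(P_{k-1}+P_k)\cdot(\text{correction})$; the precise bookkeeping is routine once the block recursion is set up.

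Finally I would argue that the supremum of $\frac{A(x)B(x)}{x}$ over all $x$ is the supremum of the per-block maxima, and that as $x\to\infty$ only arbitrarily large blocks contribute, so $\limsup_{x\to\infty}\frac{A(x)B(x)}{x} = \limsup_{k\to\infty}\frac{2}{1+D_k}$. For this one checks two inequalities: that for $x$ in the $k$-th block the ratio is at most $\frac{2}{1+D_k}+o(1)$ (upper bound, from the monotone/stepwise behavior described above), and that the value $\frac{2}{1+D_k}$ is genuinely attained up to $o(1)$ at a suitable $x$ near the end of that block (lower bound). Both follow from the exact corner values $A(P_k-1)B(P_k-1)=P_k$ together with the explicit description of how the counting functions jump inside a block.

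I expect the main obstacle to be the intra-block optimization with the correct normalization: one has to see clearly which of $A,B$ is frozen on the block $[P_{k-1},P_k)$, where inside the block the product $A(x)B(x)/x$ peaks, and then verify that the peak value, after substituting the nested block sizes, collapses to $\frac{2}{1+D_k}$ with $D_k$ given by \eqref{eq:defDk}. Getting the recursion $D_k=\frac{1-D_{k-1}}{m_k}$ right — in particular the sign pattern and the off-by-one in the indices — is the delicate bookkeeping; everything else (unique mixed-radix representation, the corner identities, passing from block maxima to the global $\limsup$) is straightforward.
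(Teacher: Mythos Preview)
The present paper does not contain a proof of Theorem~B: the result is quoted verbatim from \cite{Ma} (Lemma~2.1 there) as background for the definition of $LSPAC$, and no argument for it appears anywhere in this text. There is therefore nothing here against which to compare your proposal.

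On the proposal itself: the global strategy --- compute $A(x)B(x)$ at the block boundaries $P_k=m_1\cdots m_k$, locate the per-block maximum of $A(x)B(x)/x$, identify it asymptotically with $\tfrac{2}{1+D_k}$, and pass to the $\limsup$ --- is the natural one, and the recursion $D_k=(1-D_{k-1})/m_k$ is correct. One point is inaccurate, though: on the block $[P_{k-1},P_k)$ (say $k$ even) it is true that $A$ is constant, but $B$ does \emph{not} jump only at multiples of $P_{k-1}$. Its jump set on that block is $\{\,jP_{k-1}+b':1\le j\le m_k-1,\ b'\in B\cap[0,P_{k-2})\,\}$, so the fine structure of the growing function is itself recursive in $k$. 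Finding where $A(x)B(x)/x$ is maximized on the block, and showing that this maximum equals $\tfrac{2}{1+D_k}+o(1)$, therefore requires unwinding that recursion rather than a single optimization over a linear staircase; you flag this as the ``delicate bookkeeping'' but do not actually perform it. For the complete computation you would need to consult \cite{Ma} directly.
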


In this paper, we consider the properties of the set called {\it Lagrange spectrum of perfect additive components} $$\mathfrak{L}:=\left\{ \limsup_{k\to \infty }\frac{2}{1+D_k}: (m_i)\in \mathbb{Z}_{\ge 2}^{\mathbb{Z}^+}\right\},$$
where $D_k$ is defined in \eqref{eq:defDk}. In 2011, Chen and Fang \cite[Theorem~1]{ChenFang} obtained that
$$
\frac{2a+2}{a+2}\in \mathfrak{L}\text{ for any integer $a$ with $a\ge 2$}.
$$
In 2016, Liu and Fang \cite[Theorem~1.1]{LiuFang} extended this result by showing that
$$
\frac{2}{\frac{a-1}{ab-1}+1}\in \mathfrak{L}\text{ for any integers $a,b$ with $2\le a\le b$.}
$$
Recently, Ma \cite[Theorem~1.1 and Theorem~1.2]{Ma} proved that
$$
2\in \mathfrak{L}\text{ and }\left(\left(\frac{16}{9},2\right)\setminus \mathbb{Q}\right)\cap \mathfrak{L}\neq\emptyset,
$$
where $\mathbb{Q}$ denotes the set of rationals. Fang and S\'andor \cite[Theorem~1.5]{FangSandor} showed that
$$
\mathfrak{L}\subseteq\left[\frac32,2\right].
$$

The main theorem of this paper can be summarized as follows:
	\begin{theorem}\label{thm:main}\mbox{}
		\begin{enumerate}
			\item\label{it:closed} The set $\mathfrak{L}$ is closed.
			\item\label{it:accpoint} The set $[\frac32,\gamma_0)\cap\mathfrak{L}$ is countably infinite, and can be given explicitly, where $\gamma_0$ is the smallest accumulation point of $\mathfrak{L}$.
			\item\label{it:freiman} $\left[\frac74,2\right]\subseteq \mathfrak{L}$ but $\left[\frac{12}{7}-\delta,2\right]\not\subseteq \mathfrak{L}$ for any $\delta>0$.
			\item\label{it:zeromeas} The Lebesgue-measure of $[\frac32,\frac{17}{10}]\cap\mathfrak{L}$ is zero.
		\end{enumerate}
	\end{theorem}

We may write $[\frac32,\gamma_0)\cap \mathfrak{L}=\{\gamma_1,\gamma_2,\ldots\}$, where $\gamma_n$ is a monotone increasing sequence converging to $\gamma_0$, in particular,
$$
	\gamma_1=\frac32<\gamma_2=\frac85<\gamma_3=\frac{13}{8}<\gamma_4=\frac{109}{67}<\cdots<\gamma_0\approx1.62688284...
$$
All values of the sequence $\gamma_n$ can be determined explicitly, see Section~\ref{sec:prelim}.


It follows from Theorem~\ref{thm:main} that the set $\mathfrak{L}$ has some similar properties to the so-called Lagrange spectrum $LS$. Let $\alpha $ be a positive irrational number. Define $\displaystyle k(\alpha )=\limsup _{n,m\to \infty}\frac{1}{|n^2\alpha -nm|}$. Hurwitz \cite{Hurwitz} proved that $k(\alpha) \ge \sqrt{5}$ for every positive irrational number $\alpha $. The Lagrange spectrum
$$
LS:=\{ k(\alpha ): \alpha \hbox{ is a positive irrational number} \} .
$$
For results related to Lagrange spectrum, one may refer to \cite{CuFla}, \cite{Lima}, \cite{Freiman}, \cite{Markoff}, \cite{Moreira} and \cite{Parkkonen}.

It is well known that the Lagrange spectrum is closed, see \cite[Theorem~3.2]{CuFla}, furthermore, the least accumulation point of the Lagrange spectrum is $3$ and $l\in L, l<3$ if and only if $l=\sqrt{9-\frac{4}{z_n^2}}$, where $z_n$'s are the Markov integers, see \cite{Markoff}. The corresponding phenomena for the Lagrange-like spectrum of perfect additive complement follows by Theorem~\ref{thm:main}\eqref{it:closed} and Theorem~\ref{thm:main}\eqref{it:accpoint}.

Furthermore, Freiman's constant $F=\frac{2221564096+283748\sqrt{463}}{491993569}=4.527\dots $ is the name of the supremum of the set $\mathbb{R}\setminus LS$, that is $[F,\infty ) \subset LS$, but for any $\delta >0$, $[F-\delta ,\infty ) \not\subset LS$, see \cite{Freiman}. In point of view of Theorem~\ref{thm:main}\eqref{it:freiman}, the $\mathfrak{L}$ has also a Freiman-like constant, namely, there exists $\frac{12}{7}\leq c_0\leq\frac74$ such that
$$
c_0=\inf\{c\in\RR:[c,2]\subset \mathfrak{L}\}.
$$

\begin{problem}\label{2} Determine the exact value of $c_0$. Is it true that $c_0=7/4$? \end{problem}

There is another important similarity between the sets $LS$ and $\mathfrak{L}$, namely, both can be represented by using infinite iterated function systems (IFS). It is well known that every $\alpha $ can be written as a simple infinite continued fraction
$$
\alpha =m_0+\frac{1}{m_1+\frac{1}{m_2+\dots }}=:[m_0;m_1,m_2,\dots ],
$$
where $m_i \in \mathbb{Z}^+$. On the other hand if $m_i \in \mathbb{Z}^+$, then the above continued fraction defines a positive irrational number. Let us define a map $G_m(x)=\frac{1}{m+x}$ for every integer $m\in\ZZ^+$. Then
$$
[m_0;m_1,m_2,\dots ]=m_0+\lim_{k\to\infty}G_{m_1}\circ\cdots\circ G_{m_k}(0).
$$

If $\frac{1}{|n^2\alpha -nm|}>2$, then there exists a $k$ such that $\frac{m}{n}=\frac{p_k}{q_k}=[m_0;m_1,\dots,m_k]$, see for example \cite[Theorem~19]{Khin}. Hence $k(\alpha )=\limsup_{k\to \infty}\frac{1}{|p_k^2\alpha -p_kq_k|}$. In 1921, Perron \cite{Perron} proved
$$
\frac{1}{|p_k^2\alpha -p_kq_k|}=[0;m_k,m_{k-1},\dots ,m_1]+[m_{k+1};m_{k+2},\dots ].
$$
In particular,
$$
LS=\left\{\limsup_{k\to\infty}\left(G_{m_k}\circ\cdots\circ G_{m_1}(0)+m_{k+1}+\lim_{\ell\to\infty}G_{m_{k+2}}\circ\cdots\circ G_{m_\ell}(0)\right)|(m_i)\in\ZZ_{\geq1}^{\ZZ^+}\right\} .
$$

Now, let us define the maps $\widehat{G}_m(x)=\frac{2mx}{(m+2)x-2}$. By Theorem~B, we will show later that
\begin{equation}\label{eq:LSPACform}
\mathfrak{L}=\left\{\limsup_{k\to\infty}\widehat{G}_{m_k}\circ\cdots\circ\widehat{G}_{m_1}(2)|(m_i)\in\ZZ^{\ZZ^+}_{\geq2}\right\}.
\end{equation}

Moreira \cite{Moreira} showed that the map $\alpha\mapsto\dim_H\left([\sqrt{5},\alpha]\cap LS\right)=\overline{\dim}_B\left([\sqrt{5},\alpha]\cap LS\right)$ is monotone increasing and continuous on $[\sqrt{5},\infty)$, where $\dim_H$ denotes the Hausdorff dimension and $\overline{\dim}_B$ denotes the upper box-counting dimension. For the definition and basic properties of the Hausdorff- and box-counting dimension we refer to \cite{Falconer}.

\begin{problem}
	Is $\dim_H\left([\frac32,\alpha]\cap \mathfrak{L}\right)=\overline{\dim}_B\left([\frac32,\alpha]\cap \mathfrak{L}\right)$? Is the map $\alpha\mapsto\dim_H\left([\frac32,\alpha]\cap \mathfrak{L}\right)$ continuous?
\end{problem}

\section{\bf Preliminaries}\label{sec:prelim}

In this section, we summarize some basic facts in the theory of iterated function systems relevant for our later calculations. We say that a map $f\colon\RR\mapsto\RR$ is contracting if there exists a constant $0<c<1$ such that $|f(x)-f(y)|\leq c|x-y|$. By Banach's fixed point theorem, every contractive map $f$ has a unique fixed point $x=f(x)$. For a contractive map $f$, let us denote its unique fixed point by $\mathrm{Fix}(f)$.

Let $\Psi=\{f_1,\ldots,f_n\}$ be a finite collection of contractions, which we call {\it iterated function system (IFS)}. Hutchinson~\cite{Hut} showed that there exists a unique non-empty compact set $\Lambda$ such that
\begin{equation}\label{eq:lambda}
\Lambda=\bigcup_{i=1}^nf_i(\Lambda).
\end{equation}
The set $\Lambda$ is called the {\it attractor} of the IFS $\Psi$. In particular, if $B\subset\RR$ is a compact set such that $f_i(B)\subseteq B$ for every $i=1,\ldots,n$ then
\begin{equation}\label{eq:containment}
\Lambda=\bigcap_{k=1}^\infty\bigcup_{(i_1,\ldots,i_k)\in\{1,\ldots,n\}^k}f_{i_1}\circ\cdots\circ f_{i_k}(B)\subset B.
\end{equation}

Using \eqref{eq:containment}, one can prove the following simple observation.

\begin{lemma}\label{lem:zeromeasure}
	Let $\Psi=\{f_1,\ldots,f_n\}$ be a finite collection of contractions such that the contracting ratio of $f_i$ is $c_i$. If $\sum_{i=1}^nc_i<1$ then $\lambda(\Lambda)=0$, where $\lambda$ denotes the Lebesgue measure on the real line.
\end{lemma}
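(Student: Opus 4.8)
The plan is to realize the attractor $\Lambda$ as a nested intersection of finite unions of images of one fixed compact interval, and then bound the Lebesgue measure of the $k$-th stage by a geometric quantity controlled by the contraction ratios.

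First I would produce a compact interval $B\subset\RR$ that is forward-invariant under every $f_i$, i.e.\ $f_i(B)\subseteq B$ for all $i$. Fix an arbitrary point $x_0\in\RR$ and put $d_i=|f_i(x_0)-x_0|$. For $x$ in the closed interval $B=[x_0-R,x_0+R]$ the contraction property gives $|f_i(x)-x_0|\le|f_i(x)-f_i(x_0)|+|f_i(x_0)-x_0|\le c_iR+d_i$, so the choice $R=\max_{1\le i\le n}\frac{d_i}{1-c_i}$ makes $c_iR+d_i\le R$ and hence $f_i(B)\subseteq B$ for every $i$. (This $B$ also shows, via \eqref{eq:containment}, that $\Lambda\subseteq B$ is bounded, which is all we need.)

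With such a $B$ in hand, \eqref{eq:containment} yields
$$\Lambda=\bigcap_{k=1}^\infty\bigcup_{(i_1,\ldots,i_k)\in\{1,\ldots,n\}^k}f_{i_1}\circ\cdots\circ f_{i_k}(B).$$
Each composition $f_{i_1}\circ\cdots\circ f_{i_k}$ is a contraction with ratio at most $c_{i_1}\cdots c_{i_k}$, so it maps the interval $B$ into a set of diameter at most $c_{i_1}\cdots c_{i_k}\,\lambda(B)$, hence of Lebesgue measure at most $c_{i_1}\cdots c_{i_k}\,\lambda(B)$. By monotonicity and subadditivity of $\lambda$, for every $k$,
$$\lambda(\Lambda)\le\sum_{(i_1,\ldots,i_k)\in\{1,\ldots,n\}^k}c_{i_1}\cdots c_{i_k}\,\lambda(B)=\Big(\sum_{i=1}^nc_i\Big)^{\!k}\lambda(B).$$
Since $\sum_{i=1}^nc_i<1$, letting $k\to\infty$ forces $\lambda(\Lambda)=0$.

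The only place requiring any care is the construction of the invariant interval $B$ and the remark that a contraction shrinks the length of an interval by its ratio; both are elementary, and everything else is just the geometric-series estimate $\big(\sum c_i\big)^k\to 0$. So I do not anticipate a genuine obstacle here — the lemma is essentially a clean application of \eqref{eq:containment}.
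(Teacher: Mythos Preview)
Your proof is correct and follows essentially the same approach as the paper's: both use \eqref{eq:containment} together with the estimate $\lambda(f_{i_1}\circ\cdots\circ f_{i_k}(B))\le c_{i_1}\cdots c_{i_k}\,\lambda(B)$ and the geometric-series bound $\big(\sum_i c_i\big)^k\to 0$. The only addition is your explicit construction of the forward-invariant interval $B$, which the paper takes for granted in stating \eqref{eq:containment}.
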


\begin{proof}
	Since $|f_i(x)-f_i(y)|\leq c_i|x-y|$ then $\lambda(f_{i_1}\circ\cdots\circ f_{i_k}(B))\leq c_{i_1}\cdots c_{i_k}\lambda(B)$ and so, by \eqref{eq:containment},
	$$
	\lambda(\Lambda)\leq\sum_{(i_1,\ldots,i_k)\in\{1,\ldots,n\}^k}\lambda(f_{i_1}\circ\cdots\circ f_{i_k}(B))=\left(\sum_{i=1}^nc_i\right)^k\lambda(B)\to0\text{ as }k\to\infty.
	$$
\end{proof}

Let us denote the distance between sets by $\mathrm{dist}$, that is, for $A,B\subseteq\RR$, let\linebreak $\mathrm{dist}(A,B)=\inf\{|x-y|\big|x\in A,\ y\in B\}$. With a slight abuse of notation, we write $\mathrm{dist}(x,A)=\mathrm{dist}(\{x\},A)$ for the distance of a point $x\in\RR$ and a set $A\subseteq\RR$.

\begin{lemma}\label{lem:convergence}
	Let $\Psi=\{f_1,\ldots,f_n\}$ be a finite collection of contractions such that the contracting ratio of every $f_i$ is at most $c\in(0,1)$. For every sequence $(i_1,i_2,\ldots)\in\{1,\ldots,n\}^{\ZZ^+}$ and every $x\in\RR$, $\liminf_{k\to\infty}f_{i_k}\circ\cdots\circ f_{i_1}(x)\in\Lambda$, where $\Lambda$ is the attractor of $\Psi$. In particular, for every open set $U\supset\Lambda$, for every $x\in\RR$ and for every sufficiently large $k$, $f_{i_k}\circ\cdots\circ f_{i_1}(x)\in U$.
\end{lemma}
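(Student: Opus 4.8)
The plan is to exploit the containment \eqref{eq:containment} together with the uniform contraction estimate. Fix a compact set $B$ with $f_i(B)\subseteq B$ for all $i$; such a $B$ exists because each $f_i$ is a contraction (take, e.g., a large closed ball centered at one of the fixed points). Then for any sequence $(i_1,i_2,\ldots)$ the nested images $B_k:=f_{i_k}\circ\cdots\circ f_{i_1}(B)$ satisfy $B_{k+1}\subseteq B_k$, each $B_k$ is compact and nonempty, and $\mathrm{diam}(B_k)\leq c^k\,\mathrm{diam}(B)\to0$. Hence $\bigcap_k B_k$ is a single point, call it $y$, and the point $x_k:=f_{i_k}\circ\cdots\circ f_{i_1}(x)$ lies in $B_k$ for all $k\geq k_0$ once $x_{k_0}\in B$ — but in fact we do not even need $x\in B$: after one application $f_{i_1}(x)$ may not lie in $B$, so instead observe directly that $|x_k - y|\leq \mathrm{diam}(f_{i_k}\circ\cdots\circ f_{i_1}(B\cup\{x\}))\leq c^k(\mathrm{diam}(B)+\mathrm{dist}(x,B)+\mathrm{diam}(B))\to0$, since each $f_{i_j}$ shrinks the diameter of any set by a factor $\leq c$ and $B\cup\{x\}$ is a fixed bounded set. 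Thus $x_k\to y$.

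The remaining point is that $y\in\Lambda$. Since $y=\lim_k x_k$ and $x_k\in f_{i_k}\circ\cdots\circ f_{i_1}(B\cup\{x\})$, and since for $k\geq 2$ we have $f_{i_1}(B\cup\{x\})\subseteq f_{i_1}(B)\cup\{f_{i_1}(x)\}$, which after enough further contractions is contained in any neighbourhood of $B$; more cleanly, note $x_k\in f_{i_k}\circ\cdots\circ f_{i_2}(C)$ where $C=f_{i_1}(B\cup\{x\})$ is a fixed compact set, and for any $\eps>0$, enlarging $B$ to $B':=\{z:\mathrm{dist}(z,B)\leq R\}$ with $R$ large enough that $C\subseteq B'$ and still $f_i(B')\subseteq B'$ (possible since the $f_i$ are uniform contractions, so a sufficiently large ball is forward-invariant), we get $x_k\in f_{i_k}\circ\cdots\circ f_{i_2}(B')$, and \eqref{eq:containment} applied to $B'$ gives $\bigcap_m\bigcup_{(j_1,\ldots,j_m)}f_{j_1}\circ\cdots\circ f_{j_m}(B')=\Lambda$. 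Since $\mathrm{dist}(x_k,\Lambda)\leq\mathrm{diam}(f_{i_k}\circ\cdots\circ f_{i_2}(B'))\leq c^{k-1}\mathrm{diam}(B')\to 0$, and $\Lambda$ is closed, the limit $y$ lies in $\Lambda$. This also handles $\liminf$: any subsequential limit of $(x_k)$ equals $y$ since the whole sequence converges, so $\liminf_k x_k = y\in\Lambda$.

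Finally, the ``in particular'' statement is immediate: if $U\supseteq\Lambda$ is open, then since $x_k\to y\in\Lambda\subseteq U$ and $U$ is open, $x_k\in U$ for all sufficiently large $k$. (Alternatively, without invoking convergence: $\mathrm{dist}(x_k,\Lambda)\to 0$, $\Lambda$ is compact, and $U$ is an open neighbourhood of the compact set $\Lambda$, hence contains a uniform $\rho$-neighbourhood of $\Lambda$; so $x_k\in U$ once $\mathrm{dist}(x_k,\Lambda)<\rho$.)

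The only genuinely delicate point is the bookkeeping around the fact that $x$ and $f_{i_1}(x)$ need not lie in the chosen forward-invariant set $B$: one must either enlarge $B$ to a forward-invariant ball containing the relevant one-step images, or argue directly with diameters of $f_{i_k}\circ\cdots\circ f_{i_1}(B\cup\{x\})$. Everything else is a routine application of the uniform contraction ratio and \eqref{eq:containment}.
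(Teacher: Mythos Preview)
Your argument contains a genuine error: the nesting claim $B_{k+1}\subseteq B_k$ for $B_k:=f_{i_k}\circ\cdots\circ f_{i_1}(B)$ is false in general. Note that $B_{k+1}=f_{i_{k+1}}(B_k)$, and there is no reason for $f_{i_{k+1}}(B_k)\subseteq B_k$; forward-invariance of $B$ gives $f_i(B)\subseteq B$, not $f_i(B_k)\subseteq B_k$. (You are confusing the ``backward'' composition $f_{i_k}\circ\cdots\circ f_{i_1}$ with the ``forward'' one $f_{i_1}\circ\cdots\circ f_{i_k}$ used in the natural projection, where the nesting does hold.) Concretely, take $f_1(x)=x/2$, $f_2(x)=(x+1)/2$, $B=[0,1]$, and $(i_1,i_2)=(1,2)$: then $B_1=[0,\tfrac12]$ but $B_2=f_2(B_1)=[\tfrac12,\tfrac34]\not\subseteq B_1$. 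Consequently the sequence $x_k$ need \emph{not} converge; in the same example with alternating $i_k$ the odd and even subsequences tend to the distinct fixed points $\tfrac13$ and $\tfrac23$ of $f_1\circ f_2$ and $f_2\circ f_1$. The $\liminf$ in the statement is therefore essential, and your sentence ``any subsequential limit of $(x_k)$ equals $y$ since the whole sequence converges'' is unjustified.

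The salvageable core of your proof is the estimate $\mathrm{dist}(x_k,\Lambda)\to 0$, which you do obtain (somewhat circuitously) via the auxiliary forward-invariant set $B'$. From this alone the lemma follows: the sequence $(x_k)$ is eventually within a bounded neighbourhood of the compact set $\Lambda$, hence bounded; its $\liminf$ is then an accumulation point, and every accumulation point lies in $\Lambda$ since $\Lambda$ is closed and $\mathrm{dist}(x_k,\Lambda)\to0$. The ``in particular'' clause follows exactly as in your alternative argument. The paper's proof reaches $\mathrm{dist}(x_k,\Lambda)\to0$ in one line by using $\Lambda$ itself as the invariant set: since $f_i(\Lambda)\subseteq\Lambda$ by \eqref{eq:lambda}, one has $f_{i_k}\circ\cdots\circ f_{i_1}(\Lambda)\subseteq\Lambda$, so
\[
\mathrm{dist}(x_k,\Lambda)\le\mathrm{dist}\bigl(f_{i_k}\circ\cdots\circ f_{i_1}(x),\,f_{i_k}\circ\cdots\circ f_{i_1}(\Lambda)\bigr)\le c^{k}\,\mathrm{dist}(x,\Lambda)\to0,
\]
which avoids the bookkeeping with $B$ and $B'$ entirely.
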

\begin{proof}
By \eqref{eq:lambda}
\begin{align*}
\mathrm{dist}(f_{i_k}\circ\cdots\circ f_{i_1}(x),\Lambda)\leq\mathrm{dist}(f_{i_k}\circ\cdots\circ f_{i_1}(x),f_{i_k}\circ\cdots\circ f_{i_1}(\Lambda))\leq c^k\mathrm{dist}(x,\Lambda)\to0\text{ as $k\to\infty$,}
\end{align*}
where $0<c<1$ is chosen such that $|f_i(x)-f_i(y)|\leq c|x-y|$ for every $i=1,\ldots,n$ and $x,y\in\RR$. The claim then follows by the compactness of $\Lambda$.
\end{proof}

For every point $x\in\Lambda$, there exists an infinite sequence $\mathbf{i}=(i_1,i_2,\ldots)\in\{1,\ldots,n\}^{\ZZ^+}$ such that
$$
x=\lim_{k\to\infty}f_{i_1}\circ\cdots\circ f_{i_k}(0).
$$
Observe that the limit on the right-hand side exists since the maps $f_i$ are contractions. One can define a map $\Pi\colon\{1,\ldots,n\}^{\ZZ^+}\mapsto\Lambda$ by
$$
\Pi(\mathbf{i}):=\lim_{k\to\infty}f_{i_1}\circ\cdots\circ f_{i_k}(0)
$$
called the {\it natural projection}. Let $\sigma\colon\{1,\ldots,n\}^{\ZZ^+}\mapsto\{1,\ldots,n\}^{\ZZ^+}$ be the left-shift operator, that is,
$$
\sigma(i_1,i_2,\ldots)=(i_2,i_3,\ldots).
$$
Hence, by using the definition of the natural projection $\Pi$ it is easy to see that
$$
\Pi(\mathbf{i})=f_{i_1}(\Pi(\sigma\mathbf{i})).
$$

Now, let us define a specific family of contractive maps on $\RR$ as $T_m(x)=\frac{1-x}{m}$ for $m\in\ZZ_{\geq2}$. Then clearly for every $(m_i)\in\ZZ^{\ZZ^+}_{\geq2}$
\[
T_{m_k}\circ T_{m_{k-1}}\circ \cdots \circ T_{m_1}(0)=\frac{1}{m_k}-\frac{1}{m_km_{k-1}}+\frac{1}{m_km_{k-1}m_{k-2}}-\dots
+(-1)^{k-1}\frac{1}{m_km_{k-1}\dots m_1},
\]
which corresponds to  \eqref{eq:defDk}. Let
$$
\mathcal{L}=\left\{\liminf_{k\to\infty}T_{m_k}\circ\cdots \circ T_{m_1}(0)|(m_i)\in\ZZ^{\ZZ^+}_{\geq2}\right\}.
$$
Hence,
\begin{equation}\label{eq:LSPACfunction}
\mathfrak{L}=g(\mathcal{L}),
\end{equation}
where $g(x)=\frac{2}{1+x}$. Furthermore, $\widehat{G}_m(x)=g\circ T_m\circ g^{-1}$, thus, \eqref{eq:LSPACform} follows. Hence, our main theorem will follow from the following theorems.

\begin{theorem}\label{thm5}
	
	The set $\mathcal{L}$ is closed.
	
\end{theorem}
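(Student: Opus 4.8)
\emph{Strategy.} Since $\mathcal{L}\subseteq\RR$ it is enough to prove that $\mathcal{L}$ is sequentially closed. So I would fix a sequence $\ell^{(j)}\in\mathcal{L}$ with $\ell^{(j)}\to\ell\in\RR$ and construct a single $\omega^*=(m^*_i)_{i\ge1}\in\ZZ_{\ge2}^{\ZZ^+}$ with $\liminf_{k\to\infty}D_k(\omega^*)=\ell$, where for $\omega=(m_i)$ I write $D_k(\omega):=T_{m_k}\circ\cdots\circ T_{m_1}(0)$ (so $D_0(\omega)=0$ and $D_k(\omega)\in(0,\tfrac12]$ for $k\ge1$). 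Pick sequences $\omega^{(j)}=(m^{(j)}_i)$ with $\ell^{(j)}=\liminf_kD_k(\omega^{(j)})$. The plan is to take $\omega^*$ to be the infinite concatenation $W_1W_2W_3\cdots$ of finite words, where $W_j$ is a carefully chosen finite segment of $\omega^{(j)}$ whose length grows with $j$. The only quantitative input is that each $T_m$ contracts by the factor $1/m\le\tfrac12$, hence a composition of $t$ of them contracts by $\le2^{-t}$; since all arguments that occur lie in $[0,1]$, this means that $D_k$ depends on the last $t$ symbols applied only up to an error $2^{-t}$.

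\emph{Choice of the blocks.} For each $j$ I would first fix $N_j$ with $D_{k'}(\omega^{(j)})\ge\ell^{(j)}-\tfrac1j$ for all $k'\ge N_j$ (possible since $\ell^{(j)}=\liminf_kD_k(\omega^{(j)})$). Next, since $\ell^{(j)}$ is a subsequential limit of $(D_k(\omega^{(j)}))_k$, I pick $K_j\ge N_j$ with $D_{K_j}(\omega^{(j)})\le\ell^{(j)}+\tfrac1j$, so $|D_{K_j}(\omega^{(j)})-\ell^{(j)}|\le\tfrac1j$. Then I pick $b_j$ along a subsequence on which $D_{\cdot}(\omega^{(j)})\to\ell^{(j)}$, taken so large that $b_j\ge K_j+j$ and $|D_{b_j}(\omega^{(j)})-\ell^{(j)}|\le\tfrac1j$. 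Set $W_j:=(m^{(j)}_{K_j+1},\dots,m^{(j)}_{b_j})$, take $W_1:=(m^{(1)}_1,\dots,m^{(1)}_{b_1})$ (the first block plays no role for the $\liminf$), let $\omega^*:=W_1W_2\cdots$, and let $P_j$ be the length of $W_1\cdots W_j$, so that $P_j-P_{j-1}=b_j-K_j$ for $j\ge2$.

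\emph{The two inequalities.} For ``$\le$'': $D_{P_j}(\omega^*)$ and $D_{b_j}(\omega^{(j)})$ are the images of the inputs $D_{P_{j-1}}(\omega^*)\in[0,1]$ and $D_{K_j}(\omega^{(j)})\in[0,1]$ under the same $(b_j-K_j)$-fold composition $T_{m^{(j)}_{b_j}}\circ\cdots\circ T_{m^{(j)}_{K_j+1}}$, so $|D_{P_j}(\omega^*)-D_{b_j}(\omega^{(j)})|\le2^{-(b_j-K_j)}\le2^{-j}$; since $D_{b_j}(\omega^{(j)})\to\ell$ this gives $D_{P_j}(\omega^*)\to\ell$ and hence $\liminf_kD_k(\omega^*)\le\ell$. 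For ``$\ge$'': fix $\eps>0$ and consider, for $j\ge2$, a position $P_{j-1}+t$ inside $W_j$ (so $1\le t\le b_j-K_j$). Unrolling the recursion, $D_{P_{j-1}+t}(\omega^*)=(T_{m^{(j)}_{K_j+t}}\circ\cdots\circ T_{m^{(j)}_{K_j+1}})(v_j)$ with $v_j:=D_{P_{j-1}}(\omega^*)$, while $D_{K_j+t}(\omega^{(j)})$ is the same $t$-fold composition applied to $D_{K_j}(\omega^{(j)})$; therefore $D_{P_{j-1}+t}(\omega^*)\ge D_{K_j+t}(\omega^{(j)})-2^{-t}\theta_j\ge\ell^{(j)}-\tfrac1j-\theta_j$, where $\theta_j:=|v_j-D_{K_j}(\omega^{(j)})|$ and where I used $K_j+t\ge N_j$ and $2^{-t}\le1$. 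Now $v_j=D_{P_{j-1}}(\omega^*)\to\ell$ by the ``$\le$'' computation and $D_{K_j}(\omega^{(j)})\to\ell$ by construction, so $\theta_j\to0$, whence $\ell^{(j)}-\tfrac1j-\theta_j\to\ell$; thus for all large $j$ every position of $W_j$ carries a value $\ge\ell-\eps$. Since every sufficiently large index lies in some $W_j$ with $j$ large, $\liminf_kD_k(\omega^*)\ge\ell-\eps$, and letting $\eps\to0$ yields $\liminf_kD_k(\omega^*)=\ell$, so $\ell\in\mathcal{L}$.

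\emph{Main obstacle.} The delicate step is the lower bound: controlling $D_k(\omega^*)$ at the seams where $W_{j-1}$ passes into $W_j$, since the ``history'' $\omega^*$ carries into $W_j$ differs from the history of the corresponding segment inside $\omega^{(j)}$. Naively concatenating the full prefixes $(m^{(j)}_1,\dots,m^{(j)}_{b_j})$ fails, because $\omega^{(j)}$ may have a long initial stretch on which $D$ is small --- harmless for $\ell^{(j)}=\liminf$ but fatal once those small values are placed at positions of $\omega^*$ tending to infinity. The remedy is the extra requirement $|D_{K_j}(\omega^{(j)})-\ell^{(j)}|\le\tfrac1j$: it forces the value entering $W_j$ inside $\omega^{(j)}$ to be close to $\ell$, hence close to the value $v_j$ that $\omega^*$ actually carries in (which is automatically near $\ell$ by the ``$\le$'' half), so the seam discrepancy $\theta_j$ is small and the contractivity of the $T_m$'s damps it out geometrically. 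Everything else is routine bookkeeping with the estimate that a composition of $t$ maps $T_m$ contracts distances by at most $2^{-t}$.
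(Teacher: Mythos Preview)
Your proof is correct and follows the same approach as the paper: build $\omega^*$ by concatenating finite blocks $W_j$ of $\omega^{(j)}$ taken past a threshold $N_j$ (so that all values along $W_j$ exceed $\ell^{(j)}-o(1)$), with entry and exit values close to $\ell^{(j)}$ and with block length tending to infinity so that the uniform $\tfrac12$-contraction of the $T_m$'s damps out the seam error. Your bookkeeping is if anything a little cleaner than the paper's: by bounding the seam discrepancy crudely by $1$ and requiring only $b_j-K_j\ge j$, you obtain $|D_{P_j}(\omega^*)-D_{b_j}(\omega^{(j)})|\le 2^{-j}$ directly, whereas the paper imposes the coupling condition $5\varepsilon_{N-1}/\varepsilon_N<2^{k_N-l_N}$ on the block lengths and then proves the analogous estimate $|T_{m_{[a_N,1]}}(0)-\alpha_N|<2\varepsilon_N$ by induction on $N$.
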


\begin{theorem}\label{thm2}
	
	$[0,\frac{1}{7}]\subset\mathcal{L} $.\vskip2mm
	
	
\end{theorem}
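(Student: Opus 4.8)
The plan is to prove the equivalent statement that for every $t\in[0,\tfrac17]$ there is $(m_i)\in\ZZ_{\ge2}^{\ZZ^+}$ with $\liminf_{k\to\infty}D_k=t$, where $D_k=T_{m_k}\circ\cdots\circ T_{m_1}(0)$; the case $t=0$ is trivial (take $m_i=i+1$, so $D_k\le\tfrac1{m_k}\to0$), so fix $t\in(0,\tfrac17]$. First I would pass to the ``reversed'' system: for $\bb=(b_1,b_2,\dots)\in\ZZ_{\ge2}^{\ZZ^+}$ put $\phi(\bb)=\lim_{j}T_{b_1}\circ\cdots\circ T_{b_j}(0)\in[0,\tfrac12]$ (the limit exists since each $T_m$ contracts by $\le\tfrac12$), so that $\phi(\bb)=T_{b_1}(\phi(\sigma\bb))$. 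The point is a transfer lemma: if $\bb$ satisfies $\inf_{s\ge0}\phi(\sigma^s\bb)=t$, then the sequence $\mathbf m$ built by concatenating the reversed prefixes $(b_{n_1},\dots,b_1),(b_{n_2},\dots,b_1),\dots$ — with $n_j\to\infty$ chosen so that $\phi(\sigma^{n_j}\bb)\to t$ — has $\liminf_k D_k=t$. This follows from Lemma~\ref{lem:convergence}-type estimates: at the end of the $j$-th block $D_k$ is within $2^{-n_{j-1}}$ of $\phi(\bb)=t$, while after reading $r$ symbols of the $j$-th block $D_k$ is within $2^{-r}\bigl(2^{-n_{j-1}}+|t-\phi(\sigma^{n_j}\bb)|\bigr)$ of $\phi(\sigma^{n_j-r}\bb)\ge t$; both errors tend to $0$ as $j\to\infty$, so $t$ is precisely the least accumulation value of $(D_k)$.

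Because $\phi(\sigma^s\bb)=T_{b_{s+1}}(\phi(\sigma^{s+1}\bb))$, prescribing $\bb$ is the same as prescribing a \emph{backward value sequence} $t=t_0,t_1,t_2,\dots$ with $t_{s+1}=1-b_{s+1}t_s$, $b_{s+1}\in\ZZ_{\ge2}$ (then $\phi(\sigma^s\bb)=t_s$ automatically). What must be arranged is: $t_s\ge t$ for every $s$ and $\liminf_s t_s=t$. Given $t_s$, the admissible next values are $t_{s+1}=1-bt_s$ for integers $b$ in $\bigl[\tfrac1{2t_s},\tfrac{1-t}{t_s}\bigr]$ — an interval that for $t\le\tfrac17$ always meets $\ZZ_{\ge2}$ and has length $\tfrac{1/2-t}{t_s}\ge\tfrac5{14t_s}$, giving genuine freedom of choice. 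Since each step map $x\mapsto1-bx$ is \emph{expanding}, the orbit $(t_s)$ can be steered: I would keep $t_s$ inside a band $[t,\tau]$ with $\tau=\tfrac{1-t}{2}<\tfrac12$ (so that some legal move never forces $t_s$ out of $[t,\tau]$, hence never below $t$), and inside the band steer $t_s$ toward a value $t^\ast$ for which $1-b^\ast t^\ast$ can be made as close to $t$ as desired for an admissible $b^\ast$ (e.g.\ any $t^\ast$ with $\tfrac{1-t}{t^\ast}$ close to an integer in the legal range), then take that step and climb back; iterating with shrinking tolerance yields $\inf_s t_s=t$.

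The hard part is exactly this steering: one must show the value sequence never hits a dead end (a $t_s$ from which every admissible move leaves the band or drops below $t$), and that it is not trapped in a sub-band or a cycle bounded away from $t$ — a case analysis in which the bound $t\le\tfrac17$ is used essentially, since for $t$ beyond $\approx\tfrac16$ the legal intervals become too short and the construction breaks down; this is the mechanism behind the gap $[\tfrac{12}7-\delta,2]\not\subset LSPAC$ and the Freiman-like constant $c_0\in[\tfrac{12}7,\tfrac74]$ in Theorem~\ref{thm:main}\eqref{it:freiman}. One may lighten the work by invoking Theorem~\ref{thm5}: since $\cL$ is closed, it suffices to realize a dense subset of $[0,\tfrac17]$, so one may take $\bb$ (equivalently the value sequence) eventually periodic; the task then reduces to proving that the periodic minimal values — the simplest being $\tfrac{c-1}{cd-1}$ for $2\le c\le d$, coming from period $2$ — are dense in $[0,\tfrac17]$. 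That density is itself an overlap statement for the cylinders $T_m([0,\tfrac17])=[\tfrac6{7m},\tfrac1m]$, $m\ge7$, which cover $(0,\tfrac17]$ without gaps precisely because $\tfrac6{7m}\le\tfrac1{m+1}$ for $m\ge6$ — once more pinning the threshold at $\tfrac17$.
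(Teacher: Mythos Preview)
Your transfer lemma is sound and the block-concatenation of reversed prefixes is close in spirit to what the paper does; note, though, that for the transition error $|D_{\text{end of block }j-1}-t_{n_j}|$ to vanish you need $t_{n_j}=\phi(\sigma^{n_j}\bb)\to t$, so the hypothesis on $\bb$ must really be $\liminf_{s}\phi(\sigma^s\bb)=t$, not merely $\inf_s\phi(\sigma^s\bb)=t$. This is exactly where the argument stalls: producing such a $\bb$ is, as you acknowledge, ``the hard part'', and you do not carry it out. The steering paragraph is a sketch, not a proof --- you never verify that the admissible-move interval always contains an integer keeping $t_{s+1}$ in the band $[t,\tau]$, nor that the orbit can be brought arbitrarily close to $t$ while remaining $\ge t$ throughout. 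Your fallback via Theorem~\ref{thm5} also has a gap: the period-$2$ minima $\{\tfrac{c-1}{cd-1}:2\le c\le d\}$ are \emph{not} dense in $(0,\tfrac17]$ --- for instance the open interval $(\tfrac{5}{53},\tfrac{3}{31})\approx(0.0943,0.0968)$ contains none of them, since $c\le d$ forces $\tfrac{c-1}{cd-1}\le\tfrac{1}{c+1}$, limiting the relevant $c$ to $\{2,\ldots,9\}$, and one checks each case --- and the cylinder overlap $\bigcup_{m\ge7}T_m([0,\tfrac17])=(0,\tfrac17]$ is a statement about the attractor of an IFS, not about density of any countable set of periodic minima.

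The paper sidesteps the steering problem entirely by a small but decisive modification of your block scheme. It writes $x=T_m(y)$ with $m\ge6$ and $y\in[\tfrac17,\tfrac37]$ (using that $\bigcup_{m\ge6}T_m([\tfrac17,\tfrac37])=(0,\tfrac17]$), expands $y=\Pi(K_1,K_2,\ldots)$ with $K_i\in\{2,3,4\}$ using that $[\tfrac17,\tfrac37]$ is the attractor of $\{T_2,T_3,T_4\}$, and sets
\[
\underline{m}=(3,K_1,m,\ 3,K_2,K_1,m,\ 3,K_3,K_2,K_1,m,\ \ldots).
\]
The prepended symbol $3$ in each block is the device you are missing: since $T_3\bigl([0,\tfrac12]\bigr)\subset[\tfrac16,\tfrac13]\subset[\tfrac17,\tfrac37]$, this single step forces the orbit back into the invariant interval of $\{T_2,T_3,T_4\}$ \emph{regardless of what came before}, so every $D_k$ with $m_k\ne m$ lies exactly in $[\tfrac17,\tfrac37]$, hence $\ge x$, with no error term to manage. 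At the $m$-positions the growing prefixes $(K_n,\ldots,K_1)$ force the input to $T_m$ to converge to $y$, so $D_k\to x$ along that subsequence. This yields $\liminf_k D_k=x$ in a paragraph, with no need for the backward-orbit analysis or the closedness of $\mathcal{L}$.
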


\begin{theorem}\label{thm4}
	
	$$\mathcal{L} \cap \bigcup_{n=0}^{\infty }\left(\frac{1}{6}+\frac{1}{93}\frac{1}{4^n},\frac{1}{6}+\frac{1}{84}\frac{1}{4^n}\right)=\emptyset .$$
	
\end{theorem}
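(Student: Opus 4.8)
The plan is to argue by contradiction, working directly with $\mathcal{L}$ and the maps $T_m(x)=\frac{1-x}{m}$. Recall that $x_k:=T_{m_k}\circ\cdots\circ T_{m_1}(0)$ satisfies $x_0=0$, $x_k=\frac{1-x_{k-1}}{m_k}$, and hence $0\le x_k\le\frac{1}{m_k}\le\frac12$ for $k\ge1$. Suppose some $(m_i)\in\ZZ^{\ZZ^+}_{\ge2}$ has $L:=\liminf_{k}x_k=\frac16+\ell$ with $\ell\in\bigl(\frac{1}{93}\frac{1}{4^n},\frac{1}{84}\frac{1}{4^n}\bigr)$ for some $n\ge0$. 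Since $L>\frac16$, fix $\delta\in(0,\ell)$ and choose $K$ with $x_k>L-\delta>\frac16$ for all $k\ge K$; then for $k\ge K+1$ the identity $x_k=\frac{1-x_{k-1}}{m_k}>\frac16$ forces $m_k<6(1-x_{k-1})<5$, i.e. $m_k\in\{2,3,4\}$. Note also that every subsequential limit of $(x_k)$ is $\ge L$, while every $x_k$ lies in $[0,\frac12]$.

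The core is a \emph{digit forcing} along the liminf. Pick $k_j\to\infty$ with $x_{k_j}\to L$; since the finitely many sequences $(m_{k_j-i})_j$ for $0\le i\le 2n+2$ take values in $\{2,3,4\}$ for $j$ large, we may pass to a subsequence so that $m_{k_j-i}\equiv\mu_i$ is constant, and then $x_{k_j-i}\to x^{(i)}:=1-\mu_{i-1}x^{(i-1)}$, with $x^{(0)}=L$ and each $x^{(i)}\in[L,\frac12]$. I would show by induction that $\mu_0=4$, that $\mu_i=2$ for $1\le i\le 2n+1$, and that
$$
x^{(2r+1)}=\tfrac13-4^{\,r+1}\ell\quad(0\le r\le n),\qquad x^{(2r)}=\tfrac13+2\cdot4^{\,r}\ell\quad(1\le r\le n+1).
$$
The inductive step simply tests the three possibilities $\mu_i\in\{2,3,4\}$ against the constraint $L\le x^{(i+1)}=1-\mu_i x^{(i)}\le\frac12$. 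For $i=0$: $\mu_0=2$ gives $x^{(1)}=\frac23-2\ell>\frac12$, and $\mu_0=3$ gives $x^{(1)}=\frac12-3\ell$ from which \emph{every} choice of $\mu_1$ forces $x^{(2)}<L$ (here $\ell<\frac{1}{84}$ suffices), so $\mu_0=4$. For $1\le i\le 2n+1$: at even $i$ both $\mu_i=3$ and $\mu_i=4$ immediately give $x^{(i+1)}<0$; at odd $i=2r+1$ they give $x^{(i+1)}=3\cdot4^{\,r+1}\ell$ resp. $x^{(i+1)}=-\frac13+4^{\,r+2}\ell$, both below $L$ because $\ell<\frac{1}{84}\frac{1}{4^{n}}\le\min\bigl\{\frac{1}{6(3\cdot4^{r+1}-1)},\frac{1}{2(4^{r+2}-1)}\bigr\}$ for $r\le n$; and $\mu_i=2$ yields exactly the claimed $x^{(i+1)}\in[L,\frac12]$ (again using $\ell<\frac{1}{84}\frac{1}{4^n}$).

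Carrying this to its end produces $x^{(2n+2)}=\frac13+2\cdot4^{\,n+1}\ell$, and now $\mu_{2n+2}$ must again lie in $\{2,3,4\}$: $\mu_{2n+2}=3,4$ give $x^{(2n+3)}<0$, while $\mu_{2n+2}=2$ gives $x^{(2n+3)}=\frac13-4^{\,n+2}\ell$, which requires $\frac13-4^{\,n+2}\ell\ge L=\frac16+\ell$, i.e. $\ell\le\frac{1}{6(4^{n+2}+1)}=\frac{1}{96\cdot4^{n}+6}$. This contradicts $\ell>\frac{1}{93}\frac{1}{4^n}$, since $96\cdot4^n+6>93\cdot4^n$. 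Hence no such $L$ exists, and as $n$ and $(m_i)$ were arbitrary, this is the assertion.

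The only genuine difficulty is bookkeeping: one must track the alternating signs and the geometric growth $\ell,4\ell,16\ell,\dots$ of the deviation of the $x^{(i)}$ from $\frac13$, and verify that the single pair of inequalities $\frac{1}{93}\frac{1}{4^n}<\ell<\frac{1}{84}\frac{1}{4^n}$ does two jobs simultaneously — the upper bound $\frac{1}{84}$ keeps the digits $3$ and $4$ infeasible through all $\approx 2n$ intermediate steps (which is exactly what pins its value), and the lower bound $\frac{1}{93}$ makes the terminal value $\frac13-4^{n+2}\ell$ drop below $L$. Each such check reduces to comparing $6(c\cdot4^n+d)$ with $84\cdot4^n$ or with $93\cdot4^n$ for small explicit constants $c,d$, and is immediate. (It is worth recording that these gaps accumulate at $\frac16$; since $\frac16=g^{-1}\!\left(\frac{12}{7}\right)$ for $g(x)=\frac{2}{1+x}$, this is precisely the input behind the statement $\left[\frac{12}{7}-\delta,2\right]\not\subseteq LSPAC$ in Theorem~\ref{thm:main}\eqref{it:freiman}.)
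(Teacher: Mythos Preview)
Your argument is correct. The backward recursion $x^{(i+1)}=1-\mu_i x^{(i)}$ together with the a priori constraint $x^{(i)}\in[L,\tfrac12]$ (valid because each $x^{(i)}$ is a subsequential limit of the orbit and $x_k\le\tfrac12$) really does force $\mu_0=4$ and then $\mu_1=\cdots=\mu_{2n+2}=2$, and the terminal inequality $\ell>\tfrac{1}{93\cdot4^n}\ge\tfrac{1}{6(4^{n+2}+1)}$ closes the trap. All of the case checks you sketch go through as written.

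Your route differs from the paper's in a useful way. The paper works structurally with the IFS $\{T_2,T_3,T_4\}$: it uses that the attractor is $[\tfrac17,\tfrac37]$ to locate the first two digits, reduces to the block pattern $(4,2,\ldots,2)$, and then invokes a separate lemma bounding $\limsup_k x_k\le\tfrac{13}{31}$ (the fixed point of $T_2\circ T_4\circ T_2\circ T_2$) to obtain the constant $\tfrac{1}{93}$. You bypass both the attractor picture and the auxiliary $\limsup$ lemma by running the inverse dynamics $x\mapsto 1-\mu x$ one digit at a time and testing each $\mu\in\{2,3,4\}$ directly against $[L,\tfrac12]$. This is more elementary and self-contained; the paper's version, on the other hand, makes it clearer why the specific constant $\tfrac{13}{31}$ appears and how the argument would adapt to related questions, since the $\limsup$ bound is reusable. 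The underlying phenomenon---that near $L$ the digit string is forced to be $4$ followed by a long run of $2$'s---is the same in both arguments.
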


Let $S\subset \mathbb{R}$ be a Lebesgue-measurable set. The Lebesgue-measure of $S$ will be denoted by $\lambda (S)$.

\begin{theorem}\label{thm3}
	
	$\lambda\left(\mathcal{L} \cap [\frac{3}{17},\frac{1}{3}]\right)=0$.
	
\end{theorem}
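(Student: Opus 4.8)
The plan is to cover $\mathcal L\cap(\tfrac3{17},\tfrac13]$ by (the closure of) the attractor $\Lambda$ of the \emph{infinite} iterated function system
\[
\Psi^\ast=\{T_2\}\cup\{T_4^{\,p}\circ T_3:p\ge0\},
\]
and to observe that the contraction ratios of the maps in $\Psi^\ast$ sum to $\tfrac12+\sum_{p\ge0}\tfrac1{3\cdot4^{p}}=\tfrac{17}{18}<1$, so that the estimate proving Lemma~\ref{lem:zeromeasure} still forces $\lambda(\Lambda)=0$; since $\mathcal L\cap[\tfrac3{17},\tfrac13]$ differs from $\mathcal L\cap(\tfrac3{17},\tfrac13]$ by at most the single point $\tfrac3{17}$, this gives Theorem~\ref{thm3}. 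So fix $x\in\mathcal L\cap(\tfrac3{17},\tfrac13]$ and a sequence $(m_i)\in\ZZ_{\ge2}^{\ZZ^+}$ with $x=\liminf_k D_k$, where $D_k=T_{m_k}\circ\cdots\circ T_{m_1}(0)$; choose $c$ with $\tfrac3{17}<c<x$, so that $D_k\ge c$ for all $k$ beyond some index $N$.

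The first step is to constrain the tail $(m_k)_{k\ge N}$. Since $D_k\le\tfrac1{m_k}$ we get $m_k\le\tfrac1c<6$; and $m_k=5$ is impossible, because it would give $D_k=\tfrac{1-D_{k-1}}5\le\tfrac{1-c}5<c$ (using $c>\tfrac16$), contradicting $D_k\ge c$. Hence $m_k\in\{2,3,4\}$ for $k\ge N$. The decisive restriction is that the tail cannot contain two consecutive terms $m_k=2,\ m_{k+1}=4$: in that case $D_{k+1}=T_4(T_2(D_{k-1}))=\tfrac{1+D_{k-1}}8\ge c$ forces $D_{k-1}\ge 8c-1$, while $D_{k-1}=\tfrac{1-D_{k-2}}{m_{k-1}}\le\tfrac{1-c}2$, and $8c-1>\tfrac{1-c}2$ precisely because $c>\tfrac3{17}$ — a contradiction. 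Thus in the tail every $4$ is preceded by a $3$ or a $4$.

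The second step is a reversal/compactness argument. Pick indices $k_j\to\infty$ with $D_{k_j}\to x$; then $D_{k_j}=T_{m_{k_j}}\circ T_{m_{k_j-1}}\circ\cdots\circ T_{m_1}(0)$, and since the tail alphabet is finite a diagonal extraction yields a subsequence along which $m_{k_j-i+1}\to\rho_i$ for every $i\ge1$. Because the $T_m$ are uniform contractions, $x=T_{\rho_1}\circ T_{\rho_2}\circ\cdots(0)$. The word $(\rho_i)$ is a limit of reversals of tails of $(m_i)$, so the tail's ``no $2$ immediately followed by $4$'' property turns into ``no $4$ immediately followed by $2$'' for $(\rho_i)$. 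Every word over $\{2,3,4\}$ with that property splits uniquely into blocks $2$, $3$, $4^{p}3$ $(p\ge1)$, possibly together with a terminal infinite run of $4$'s, which contributes only the single value $\mathrm{Fix}(T_4)=\tfrac15$; the block corresponding to $4^{p}3$ is the map $T_4^{\,p}\circ T_3$. Hence $x$ lies in (the closure of) the attractor $\Lambda$ of $\Psi^\ast$, i.e. $\mathcal L\cap(\tfrac3{17},\tfrac13]\subseteq\Lambda$.

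Finally, all maps of $\Psi^\ast$ send $[0,1]$ into itself, the contraction ratio of $T_2$ is $\tfrac12$ and that of $T_4^{\,p}\circ T_3$ is $\tfrac1{3\cdot4^{p}}$, and
\[
\frac12+\sum_{p\ge0}\frac{1}{3\cdot4^{p}}=\frac12+\frac49=\frac{17}{18}<1 .
\]
Running the estimate of Lemma~\ref{lem:zeromeasure} with $B=[0,1]$ and the countable family $\Psi^\ast$ (the union over length-$k$ words is a countable union of intervals of total length $(\tfrac{17}{18})^k$, and the additional limit points, such as $\tfrac15$, form a countable and hence null set) gives $\lambda(\Lambda)=0$, and therefore $\lambda\big(\mathcal L\cap[\tfrac3{17},\tfrac13]\big)=0$. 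The step most in need of care is the reversal argument: one must check that an \emph{arbitrary} truncation index $k_j$ — not merely the end of a block — still produces a legitimate word of the subshift, and that the Lebesgue estimate underlying Lemma~\ref{lem:zeromeasure} survives the passage to an infinite IFS. The arithmetic inequality $8c-1>\tfrac{1-c}2\iff c>\tfrac3{17}$ is exactly what pins the threshold at $\tfrac3{17}$.
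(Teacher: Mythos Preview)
Your proof is correct and takes a genuinely different route from the paper's. Both arguments begin identically --- reducing to tails over $\{2,3,4\}$ and showing that the pattern $(m_{k+1},m_k)=(4,2)$ cannot occur in the tail once the liminf exceeds $\tfrac3{17}$ --- but they diverge in how they turn this subshift constraint into a null covering.

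The paper stays with a \emph{finite} IFS: it lists the $55$ admissible $4$-tuples $(m_i,m_{i-1},m_{i-2},m_{i-3})\in\{2,3,4\}^4$, applies (the uniform form of) Lemma~\ref{lem:convergence} to the IFS $\Phi=\{T_a\circ T_b\circ T_c\circ T_d:(a,b,c,d)\in A\}$, and checks that the $55$ contraction ratios sum to $\tfrac{19759}{20736}<1$, so Lemma~\ref{lem:zeromeasure} applies verbatim. (Depth~$3$ would fail: there the sum is $\tfrac{1729}{1728}>1$.) No reversal or compactness extraction is needed.

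Your approach instead passes to a reversed limit word $(\rho_i)$ via diagonal extraction, observes that the forbidden pattern becomes ``$4$ followed by $2$'', and factors such words into blocks $2$ and $4^p3$ ($p\ge0$). This yields the \emph{infinite} IFS $\Psi^\ast=\{T_2\}\cup\{T_4^{\,p}\circ T_3:p\ge0\}$ with the transparent sum $\tfrac12+\sum_{p\ge0}\tfrac1{3\cdot4^p}=\tfrac{17}{18}$. What you gain is a one-line arithmetic in place of a $55$-term check; what you pay is the reversal/compactness step, the (routine) extension of the Lemma~\ref{lem:zeromeasure} estimate to a countable family, and the bookkeeping for the countable exceptional set of words that are eventually all $4$'s. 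You flag these honestly, and each is straightforward to complete.
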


We introduce the following notations. Let $\mathbf{i}=(i_1,\ldots,i_n)\in\ZZ_{\geq2}^{n}$ be a finite word, then denote by $T_{\mathbf{i}}$ the map
$$
	T_{\mathbf{i}}=T_{i_1}\circ\cdots\circ T_{i_n}.
$$
Let $u,v$ be positive integers and $\underline{m}=(m_i)\in\ZZ^{\ZZ^+}_{\geq2}$. If $u\le v$ then let\linebreak $T_{m_{[u,v]}}(x)=(T_{m_u}\circ T_{m_{u+1}}\circ \dots \circ T_{m_v})(x)$, and if $u>v$ then let\linebreak $T_{m_{[u,v]}}(x)=(T_{m_u}\circ T_{m_{u-1}}\circ \dots \circ T_{m_v})(x)$. Finally, let us introduce the notation that for any sequence $\underline{m}\in\ZZ^{\ZZ^+}_{\geq2}$
\begin{equation}\label{eq:natproj}
\Pi(\underline{m})=\lim_{k\to\infty}T_{m_1}\circ\cdots T_{m_k}(0)=\lim_{k\to\infty}T_{m_{[1,k]}}(0)=\sum_{k=1}^\infty\frac{(-1)^{k-1}}{m_1\cdots m_k}.
\end{equation}

Let us define the sequences $M^{(n)}$ recursively. Let $M^{(1)}=2$, $M^{(2)}=3$ and let $M^{(n)}$ be the concatenation $M^{(n)}=M^{(n-1)}M^{(n-2)}M^{(n-2)}$  for $n\ge 3$, that is $M^{(3)}=(3,2,2)$, $M^{(4)}=(3,2,2,3,3)$ and so on. By the definition of $M^{(n)}$, it is easy to see that the length of the finite sequence $M^{(n)}$ is $l_n=\frac{2^n-(-1)^n}{3}$, and $M^{(n)}$ starts with $M^{(n-1)}$. Thus, it is possible to define the limiting infinite sequence $M=\lim_{n\to\infty}M^{(n)}$ as
\begin{eqnarray*}
M=(3,2,2,3,3,3,2,2,3,2,2,3,2,\ldots)=:(M_1,M_2,\ldots) \end{eqnarray*}
such that $(M_1,M_2,\dots,M_{l_n})=(M^{(n)})$ for every positive integer $n\ge 2$. Let
$$
\lambda_n=\mathrm{Fix}(T_{M^{(n)}})=T_{M^{(n)}}(0)\frac{M_1M_2\dots M_{l_n}}{M_1M_2\dots M_{l_n}+1},
$$
and
\begin{eqnarray*}
\lambda_0=\sum_{l=1}^{\infty}(-1)^{l-1}\frac{1}{M_1M_2\dots M_l}=0.2293\dots.
\end{eqnarray*}
We will prove that $\lambda _n$ is a strictly increasing sequence, $\lambda _n>\lambda _0$.

\begin{theorem}\label{thm1}
\begin{eqnarray*}
\lambda \in \mathcal{L},\hskip3mm \lambda>\lambda _0 \hskip3mm\mbox{if and only if} \hskip3mm  \lambda =\lambda _n \text{ for some $n\geq1$}.\end{eqnarray*}\end{theorem}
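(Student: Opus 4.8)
Write $f_k=f_k(\underline m)=T_{m_k}\circ\cdots\circ T_{m_1}(0)$, so $\mathcal L=\{\liminf_k f_k:\underline m\in\ZZ_{\ge2}^{\ZZ^+}\}$ and $f_k=(1-f_{k-1})/m_k\in[\tfrac1{2m_k},\tfrac1{m_k}]$. Since $T_{m_k}\circ\cdots\circ T_{m_{k-j+1}}$ contracts by a factor at most $2^{-j}$, the reversed window $(m_k,m_{k-1},\ldots,m_1)$ determines $f_k$ up to an error $O(2^{-k})$. Let $\Omega\subseteq\ZZ_{\ge2}^{\ZZ^+}$ consist of those $\underline v$ each of whose finite prefixes is a prefix of $(m_k,m_{k-1},\ldots,m_1)$ for infinitely many $k$. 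Then $\Omega$ is nonempty, compact, satisfies $\sigma(\Omega)\subseteq\Omega$ (as $(m_{k-1},\ldots,m_1)=\sigma(m_k,\ldots,m_1)$), and, with $\Pi$ as in~\eqref{eq:natproj},
\[
\liminf_{k\to\infty} f_k=\min_{\underline v\in\Omega}\Pi(\underline v).
\]
I will use the separately established facts $\lambda_n\downarrow\lambda_0$ and $\lambda_n>\lambda_0$.

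\textbf{The implication $\lambda=\lambda_n\Rightarrow\lambda\in\mathcal L$.} Fix $n$ and let $\underline m$ be purely periodic with period the reversal of $M^{(n)}$. Then $\Omega=\{v^{\infty}:v\text{ a cyclic rotation of }M^{(n)}\}$, so $\liminf_k f_k=\min_v\mathrm{Fix}(T_v)$ over the cyclic rotations $v$ of $M^{(n)}$. The only combinatorial input needed is that $M^{(n)}$ is itself the $\mathrm{Fix}(T_\cdot)$-minimal cyclic rotation; I would prove this by induction on $n$ from $M^{(n)}=M^{(n-1)}M^{(n-2)}M^{(n-2)}$. Hence $\liminf_k f_k=\mathrm{Fix}(T_{M^{(n)}})=\lambda_n>\lambda_0$, so $\lambda_n\in\mathcal L\cap(\lambda_0,\infty)$.

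\textbf{The implication $\lambda\in\mathcal L$, $\lambda>\lambda_0\Rightarrow\lambda=\lambda_n$ for some $n$.} Let $\lambda=\liminf_k f_k>\lambda_0$, so $f_k>\lambda_0$ eventually. \emph{(i) Alphabet reduction.} From $f_k=(1-f_{k-1})/m_k$ and $f_{k-1}\ge\tfrac1{2m_{k-1}}$ one checks that $m_k\ge5$ gives $f_k\le\tfrac15<\lambda_0$ and that $m_k=4$ gives $\min(f_k,f_{k-1})<\lambda_0$; hence $m_k\in\{2,3\}$ for all large $k$, so $\Omega\subseteq\{2,3\}^{\ZZ^+}$. \emph{(ii) Order.} If $J$ is the (compact) range of $\Pi$ on $\{2,3\}^{\ZZ^+}$, then $T_2(J)$ and $T_3(J)$ are disjoint intervals; iterating, $\Pi$ is an order isomorphism from $\{2,3\}^{\ZZ^+}$ equipped with the natural ``alternating lexicographic'' order $\preceq$ onto $J$. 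As $\Pi(M)=\lambda_0$, this yields $\{\underline v:\Pi(\underline v)\ge\lambda_0\}=\{\underline v:\underline v\succeq M\}$; since every point of $\Omega$ has $\Pi$-value $\ge\lambda>\lambda_0$, we get $\Omega\subseteq\{\underline v:\underline v\succ M\}$. \emph{(iii) Rigidity.} Choose $\underline v^*\in\Omega$ with $\Pi(\underline v^*)=\min_\Omega\Pi=\lambda$. Since $\sigma(\Omega)\subseteq\Omega$, all forward shifts $\sigma^j\underline v^*$ lie in $\Omega$, whence $\sigma^j\underline v^*\succeq\underline v^*$ for all $j$; thus $\underline v^*$ is $\preceq$-minimal in its own forward shift-orbit, and $\underline v^*\succ M$. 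A renormalization argument, ``desubstituting'' $\underline v^*$ by means of the substitution $(A,B)\mapsto(ABB,A)$ that generates the $M^{(n)}$ (via $(M^{(n)},M^{(n-1)})\mapsto(M^{(n+1)},M^{(n)})$, starting from $(M^{(2)},M^{(1)})=((3),(2))$), shows that this renormalization terminates after finitely many steps — it would fail to terminate only for $\underline v^*$ a shift of $M$, forcing $\Pi(\underline v^*)=\lambda_0$, which is excluded — and that upon termination at level $n$ the word $\underline v^*$ is periodic with period a cyclic rotation of $M^{(n)}$; $\preceq$-minimality together with the minimal-rotation lemma above then pin the period to $M^{(n)}$ itself, so $\lambda=\Pi(\underline v^*)=\mathrm{Fix}(T_{M^{(n)}})=\lambda_n$.

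\textbf{Main obstacle.} The crux is the rigidity step: squeezing eventual periodicity — with a period forced to be one of the special words $M^{(n)}$ — out of the two conditions ``$\preceq$-minimal in its shift-orbit'' and ``$\succ M$''. This is the counterpart here of the delicate part of Markov's theorem describing $LS\cap(-\infty,3)$, and it is where the self-similarity of $M$ must be exploited essentially and recursively. By contrast, the alphabet reduction, the order on $\{2,3\}$-words, and the minimal-rotation lemma are comparatively routine, though one must track the explicit constants with care, since $M$, $\lambda_0$ and the first few $\lambda_n$ lie numerically very close together.
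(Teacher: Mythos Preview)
Your forward direction matches the paper's (Proposition~\ref{prop:lambdainL} via Lemma~\ref{lem:fixgood}): periodic $\underline m$ with period the reversal of $M^{(n)}$, together with the minimal-rotation lemma.

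For the backward direction your route diverges from the paper's, and the rigidity step you yourself flag as the ``main obstacle'' is a genuine gap: you assert that a $\preceq$-minimal-in-shift-orbit sequence $\underline v^*\succ M$ must, via desubstitution along $(A,B)\mapsto(ABB,A)$, be periodic with period some $M^{(n)}$, but this is only sketched. Making it rigorous would require showing at each stage that $\underline v^*$ actually factors through the substitution, that the shift-minimality condition survives desubstitution, and that non-termination forces $\underline v^*$ to be $M$ itself rather than some other $S$-adic limit. This can presumably be done, but it is real work, comparable to the combinatorial core of Markov's theorem.

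The paper's argument (Proposition~\ref{prop:nobetween}) is much shorter and rests on two observations you do not use. First, an inductive gap lemma (Lemma~\ref{lem:propM}): inside the attractor $\Lambda$ of $\{T_2,T_3\}$, the cylinders $T_{M^{(n+1)}}(J)$ and $T_{M^{(n)}M^{(n)}}(J)$ are adjacent with no point of $\Lambda$ between them, so any $x\in\mathcal L\cap(\lambda_{n+1},\lambda_n)\subset\Lambda$ must lie in one of these two cylinders. Second, a parity lemma (Lemma~\ref{lem:oddeven}): because each $T_m$ reverses orientation, membership of $x\in\mathcal L$ in a cylinder of \emph{even} depth forces $x\ge$ that cylinder's fixed point, while recurrence of a reversed window of \emph{odd} length forces $x\le$ its fixed point. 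Since $l_n=(2^n-(-1)^n)/3$ is always odd, the word $M^{(n)}M^{(n)}$ has even length $2l_n$ and $M^{(n+1)}$ has odd length $l_{n+1}$; the two cases give $x\ge\lambda_n$ and $x\le\lambda_{n+1}$ respectively, a contradiction. This odd/even trick replaces your entire renormalization; in fact it slots directly into your own framework --- apply it to the prefixes of your $\underline v^*$, using $\Pi(\sigma^j\underline v^*)\ge\Pi(\underline v^*)$ in place of the $\liminf$ --- and dispatches the rigidity claim in a few lines.
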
\vskip2mm

\begin{proof}[Proof of Theorem~\ref{thm:main}]
	The first claim follows by \eqref{eq:LSPACfunction}, the fact the map $g(x)=\frac{2}{1+x}$ is continuous on $\RR^+$ and Theorem~\ref{thm5}. The second claim follows by Theorem~\ref{thm1} with the choices $\gamma_n=g(\lambda_n)$ for $n\geq0$. The third claim follows by the combination of Theorem~\ref{thm2} and Theorem~\ref{thm4} together with \eqref{eq:LSPACfunction}. Finally, the last claim follows by Theorem~\ref{thm3} and by using the continuity of the map $g$.
\end{proof}

\section{\bf Closedness of the spectrum}

\begin{proof}[Proof of Theorem~\ref{thm5}]
	Let $\alpha _n\in \mathcal{L}$ be a sequence such that $\displaystyle \lim_{n\to \infty }\alpha _n= \alpha$. Hence, for every $n\geq1$ there exists $\underline{m}^{(n)}\in \mathbb{Z}_{\ge 2}^{\mathbb{Z}^+}$, $\underline{m}^{(n)}=( m_1^{(n)},m_2^{(n)},\dots )$ such that $\displaystyle \liminf _{k\to \infty }T_{m_{[k,1]}^{(n)}}(0)=\alpha _n$. Let $\varepsilon _n=|\alpha -\alpha _n|$. Without loss of generality we may assume that $\varepsilon _n \searrow 0$.
	
	Let $l_1=0$ and let us choose $k_1$ such that $|T_{m_{[k_1,1]}^{(1)}}(0)-\alpha _1|<\varepsilon _1$.
	
	For $n\ge 2$, let $0<l_n<k_n$ be such that $|T_{m_{[l_n,1]}^{(n)}}(0)-\alpha _n|<\varepsilon _n$, $|T_{m_{[k_n,1]}^{(n)}}(0)-\alpha _n|<\varepsilon _n$, $T_{m_{[l,1]}^{(n)}}(0)>\alpha _n-\varepsilon _n$ for every $l\ge l_n$ and $\frac{5\varepsilon _{n-1}}{\varepsilon _n}<2^{k_n-l_n}$.	Let
	$$
	\underline{m}=(m_{l_1+1}^{(1)},\dots ,m_{k_1}^{(1)},m_{l_2+1}^{(2)},\dots ,m_{k_2}^{(2)},m_{l_3+1}^{(3)},\dots ,m_{k_3}^{(3)},\dots )=(m_1,m_2,\dots ).
	$$
	We will show that
	\begin{equation}\label{eq:toclosed}
	\alpha=\liminf_{k\to\infty}T_{m_{[k,1]}}(0).
	\end{equation}
	
	Let $a_N=\sum_{n=1}^N(k_n-l_n)$. To verify \eqref{eq:toclosed}, it is enough to prove that
	\begin{equation}\label{eq3}
		|T_{m_{[a_N,1]}}(0)-\alpha _N|<2\varepsilon _N\text{ for every $N\ge 1$}
	\end{equation}
	and
	\begin{equation}\label{eq4}
		T_{m_{[l,1]}}(0)>\alpha _N-3\varepsilon _N\text{ for every $a_N<l\le a_{N+1}$.}
	\end{equation}
	Indeed, in this case
	$$
	\lim_{N\to \infty}T_{m_{[a_N,1]}}(0)=\alpha\text{	and }\liminf_{l\to \infty}T_{m_{[l,1]}}(0)\ge\lim_{N\to\infty}\left(\alpha_N-3\varepsilon_N\right)=\alpha.
	$$
	
	To prove (\ref{eq3}) we argue by induction. Clearly,
	$$
	|T_{m_{[a_1,1]}}(0)-\alpha_1|=|T_{m_{[k_1,1]}^{(1)}}(0)-\alpha _1|<\varepsilon _1<2\varepsilon_1.
	$$
	Suppose that \eqref{eq3} holds for $N-1$. Then
	\begin{align*}
	|T_{m_{[a_N,1]}}(0)-\alpha _N|&\le |T_{m_{[a_N,1]}}(0)-T_{m_{[k_N,1]}^{(N)}}(0)|+|T_{m_{[k_N,1]}^{(N)}}(0)-\alpha _N|\\
	&=\frac{1}{m_{k_N}^{(N)}\dots m_{l_N+1}^{(N)}}|T_{m_{[a_{N-1},1]}}(0)-T_{m_{[l_N,1]}^{(N)}}(0)|+|T_{m_{[k_N,1]}^{(N)}}(0)-\alpha _N|\\
	&\le\frac{1}{2^{k_N-l_N}}\left( |T_{m_{[a_{N-1},1]}}(0)-\alpha_{N-1}|+|\alpha_{N-1}-\alpha|+|\alpha-\alpha_N|+|T_{m_{[l_N,1]}^{(N)}}(0) -\alpha_N|\right) +\varepsilon _N\\
	&<\frac{1}{2^{k_N-l_N}}(2\varepsilon_{N-1}+\varepsilon_{N-1}+\varepsilon_{N}+\varepsilon_{N})+\varepsilon_{N}<\frac{1}{2^{k_N-l_N}}5\varepsilon_{N-1}+\varepsilon_{N}<2\varepsilon_{N}.
	\end{align*}
	
	To prove (\ref{eq4}) we write
	\begin{align*}
	T_{m_{[l,1]}}(0)&=T_{m_{[l,a_N+1]}}\circ T_{m_{[a_N,1]}}(0)=T_{m_{[l-a_N+l_N,l_N+1]}^{(N)}}\circ T_{m_{[a_N,1]}}(0)\\
	&=T_{m_{[l-a_N+l_N,l_N+1]}^{(N)}}\circ T_{m_{[a_N,1]}}(0)-T_{m_{[l-a_N+l_N,l_N+1]}^{(N)}}\circ T_{m_{[l_N,1]}^{(N)}}(0)+T_{m_{[l-a_N+l_N,l_N+1]}^{(N)}}\circ T_{m_{[l_N,1]}^{(N)}}(0)),
\end{align*}
	where
	\begin{align*}
	&\left|T_{m_{[l-a_N+l_N,l_N+1]}^{(N)}}\circ T_{m_{[a_N,1]}}(0)-T_{m_{[l-a_N+l_N,l_N+1]}^{(N)}}\circ T_{m_{[l_N,1]}^{(N)}}(0)\right|\\
	&\qquad=\frac{1}{m_{l-a_N+l_N}^{(N)}\dots m_{l_N+1}^{(N)}}\left|T_{m_{[a_N,1]}}(0)-T_{m_{[l_N,1]}^{(N)}}(0)\right|\\
	&\qquad\le\frac{1}{2^{l-a_N}}(|T_{m_{[a_N,1]}}(0)-\alpha_N|+|\alpha_N-T_{m_{[l_N,1]}^{(N)}}(0)|)< \frac{1}{2}(2\varepsilon _N+\varepsilon _N)<2\varepsilon _N
	\end{align*}
	and
	$$
	T_{m_{[l-a_N+l_N,l_N+1]}^{(N)}}\circ T_{m_{[l_N,1]}^{(N)}}(0)=T_{m_{[l-a_N+l_N,1]}^{(N)}}(0)>\alpha _N-\varepsilon _N.
	$$
	Hence,
	$$
	T_{m_{[l,1]}}(0)>\alpha _N-\varepsilon_N-2\varepsilon_N=\alpha _N-3\varepsilon_N,
	$$
	which completes the proof.
\end{proof}

\section{\bf Estimates on the Freiman-like constant}

Let us consider the finite IFS $\Psi_4=\{T_2,T_3,T_4\}$. Let $I=\left[\frac17,\frac37\right]$. For $m\ge 2$, $T_m(I)=[\frac{4}{7m},\frac{6}{7m}]$, and \begin{eqnarray}\label{06243}
	I=\bigcup_{m=2}^{4}T_m(I).
\end{eqnarray}
Thus, by the uniqueness the attractor of $\Psi_4$ is $I=\left[\frac17,\frac37\right]$. By \eqref{06243} and direct calculation, we obtain the following statements.\vskip2mm

\begin{lemma}\label{lem:claim1} For every $z\in [\frac{1}{7},\frac{3}{7}]$ and $K\in \{2,3,4\}$, we have $\frac{1}{K}-\frac{1}{K}z\in [\frac{1}{7},\frac{3}{7}]$.\end{lemma}

\begin{lemma}\label{lem:claim2} For every $y\in [\frac{1}{7},\frac{3}{7}]$, there exist $K\in \{2,3,4\}$ and $z\in [\frac{1}{7},\frac{3}{7}]$ such that $y=\frac{1}{K}-\frac{1}{K}z$.\end{lemma}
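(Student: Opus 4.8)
The plan is to solve the equation $y=\frac1K-\frac1K z$ directly for $z$. Rearranging gives $z=1-Ky$, and the condition $z\in[\frac17,\frac37]$ is then equivalent to $\frac{4}{7K}\le y\le\frac{6}{7K}$, i.e.\ to $y\in T_K(I)$, where $I=[\frac17,\frac37]$ and $T_K(I)=[\frac{4}{7K},\frac{6}{7K}]$ as computed just before Lemma~\ref{lem:claim1}. Hence the assertion of the lemma is literally the inclusion $I\subseteq\bigcup_{K=2}^{4}T_K(I)$, which is the ``$\subseteq$'' half of the self-similarity identity \eqref{06243}. So essentially nothing new has to be proved; what remains is merely to make the choice of $K$ explicit.

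First I would record the three images $T_2(I)=[\frac27,\frac37]$, $T_3(I)=[\frac{4}{21},\frac27]$ and $T_4(I)=[\frac17,\frac{3}{14}]$. Then, given $y\in[\frac17,\frac37]$, I would split into cases: if $y\ge\frac27$ set $K=2$; if $\frac{4}{21}\le y\le\frac27$ set $K=3$; if $y\le\frac{4}{21}$ set $K=4$ (legitimate because $\frac{4}{21}<\frac{3}{14}$, so $\frac17\le y\le\frac{4}{21}$ already forces $y\in T_4(I)$). In every case $y$ lies in the corresponding $T_K(I)$, so $z:=1-Ky\in[\frac17,\frac37]$ by the equivalence above, and $y=\frac1K-\frac1K z$ holds by the choice of $z$.

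The only point that needs checking is that these three intervals genuinely cover $[\frac17,\frac37]$ with no gap, and this is immediate from the chain of inequalities $\frac17<\frac{4}{21}<\frac{3}{14}<\frac27<\frac37$, which shows that consecutive intervals overlap. So I do not expect any real obstacle: the lemma is just the pointwise reformulation of the surjectivity statement contained in \eqref{06243}, while Lemma~\ref{lem:claim1} supplies the complementary ``$T_K(I)\subseteq I$'' inclusions, so that together the two lemmas reprove \eqref{06243}.
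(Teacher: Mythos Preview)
Your proof is correct and is essentially the paper's approach made explicit: the paper simply states that Lemma~\ref{lem:claim2} follows ``by \eqref{06243} and direct calculation,'' and your argument is precisely that direct calculation, verifying the inclusion $I\subseteq T_2(I)\cup T_3(I)\cup T_4(I)$ by computing the three image intervals and checking they overlap to cover $[\tfrac17,\tfrac37]$.
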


In particular, it follows from Lemma~\ref{lem:claim2} that for every $y\in [\frac{1}{7},\frac{3}{7}]$, there exists an infinite sequence $(K_1,K_2,\ldots)\in\{2,3,4\}^{\ZZ^+}$ such that
$$
\Pi(K_1,K_2,\ldots)=y,
$$
where $\Pi$ is defined in \eqref{eq:natproj}.

\begin{proof}[Proof of Theorem \ref{thm2}]
	It infers from $\frac{4}{7m}\le \frac{6}{7(m+1)}$ for $m\ge 6$ that
	\begin{eqnarray}\label{06242} \bigcup_{m=6}^{\infty}T_m(I)=\left(0,\frac17\right].
	\end{eqnarray}\vskip2mm
	
	Suppose that $0<x<\frac{1}{7}$. By \eqref{06242}, we know that the real $x$ can be written as
	$x=\frac{1}{m}-\frac{1}{m}y$, where $m\ge 6$ and $y\in [\frac{1}{7},\frac{3}{7}]$. It follows that there exist sequences $K_1,K_2,\dots $, $K_k,\dots \in \{2,3,4\}$ and $z_1,z_2,\dots $, $z_k,\dots \in [\frac{1}{7},\frac{3}{7}]$ such that
	\begin{eqnarray*}
		y&=&\Pi(K_1,K_2,\ldots)=\sum_{k=1}^\infty\frac{(-1)^{k-1}}{K_1K_2\dots K_k}.
	\end{eqnarray*}
	Now, let
	\begin{eqnarray*}
	\underline{m}=(m_1,m_2,\dots)=(3,K_1,m,3,K_2,K_1,m,3,K_3,K_2,K_1,m,3,K_4,K_3,K_2,K_1,m,\dots).
	\end{eqnarray*}
	We will prove that
	\begin{eqnarray*}
	x=\liminf_{n\rightarrow\infty}T_{m_{[n,1]}}(0).
	\end{eqnarray*}

	First, observe that $T_{m_{[n,1]}}(0)\in\left[0,\frac12\right]$ for every $n\geq1$. Indeed, $T_m(\left[0,\frac12\right])\subset\left[0,\frac12\right]$ for every $m\geq2$.	On the other hand, since $T_3\left(\left[0,\frac12\right]\right)\subset\left[\frac{1}{6},\frac{1}{3}\right]\subset\left[\frac17,\frac37\right]$, we have that $T_{m_{[n,1]}}(0)\in\left[\frac17,\frac37\right]$ for every $n\geq1$ with $m_n=3$. Hence, it follows from Lemma~\ref{lem:claim1} and \eqref{06243} that if $m_n\neq m$, then $T_{m_{[n,1]}}(0)\in [\frac{1}{7},\frac{3}{7}]$.
	
	Simple calculations show that $m_k=m$ if and only if $k=\frac{n^2+5n}{2}$ for some $n\in\ZZ^+$. Furthermore, it is easy to see that
	\begin{eqnarray*}
		T_{m_{[\frac{n^2+5n}{2},1]}}(0)&=&\frac{1}{m}-\frac{1}{m}
		\left(\frac{1}{K_1}-\frac{1}{K_1K_2}+\dots+
		\frac{(-1)^{n-1}}{K_1K_2\dots K_n}+\frac{(-1)^{n}}{K_1\cdots K_n}T_{m_{[\frac{(n-1)^2+5(n-1)}{2}+1,1]}}(0)\right)\\
		&=&\frac{1}{m}-\frac{1}{m}y+O(\frac{1}{2^n})=x+O(\frac{1}{2^n}).
	\end{eqnarray*}
	
	This completes the proof of Theorem \ref{thm2}.
\end{proof}

Before we continue, we state a lemma on the position of the possible smallest accumulation points depending on the defining sequence.

\begin{lemma}\label{lemma0b}\mbox{}
	\begin{enumerate}
	\item\label{(vi)} Let $(m_i)\in\mathbb{Z}_{\geq2}^{\ZZ^+}$ be such that $m_i\in\{ 2,3,\ldots,K\}$ except at most finitely many $i$. Then
	$$\frac{1}{2K-1}\leq\liminf_{n\to \infty} T_{m_{[n,1]}}(0)\leq\limsup_{n\to \infty} T_{m_{[n,1]}}(0)\leq\frac{K-1}{2K-1}.$$
	
	\item\label{(vii)} Let $K\in \mathbb{Z}_{\ge 2}$ and $(m_i)\in\ZZ_{\geq2}^{\ZZ^+}$ such that $m_i\ge K$ for infinitely many integer $i$. Then $\displaystyle \liminf _{k\to \infty}T_{m_{[k,1]}}(0) \le \frac{1}{K+1}$.
\end{enumerate}
\end{lemma}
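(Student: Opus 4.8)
The plan is to prove the two parts separately; each reduces to showing that the orbit $\bigl(T_{m_{[n,1]}}(0)\bigr)_n$ is eventually trapped in an explicit interval. For the first part, fix $K\ge 2$ and consider the finite IFS $\Psi_K=\{T_2,T_3,\ldots,T_K\}$. The key observation is that the interval $B:=\left[\frac{1}{2K-1},\frac{K-1}{2K-1}\right]$ is forward invariant under $\Psi_K$: its endpoints are exactly $\mathrm{Fix}(T_K)$ and $\mathrm{Fix}(T_2)$, and since each $T_m$ is decreasing one computes $T_m(B)=\left[\frac{K}{m(2K-1)},\frac{2(K-1)}{m(2K-1)}\right]\subseteq B$ for every $m\in\{2,\ldots,K\}$, using $\frac Km\ge1$ and $\frac2m\le1$. (For $K=4$ this $B$ is precisely the interval $[\frac17,\frac37]$ from Section~4.) By \eqref{eq:containment}, the attractor $\Lambda_K$ of $\Psi_K$ satisfies $\Lambda_K\subseteq B$. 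To handle the finitely many exceptional indices, let $N_0$ exceed all $i$ with $m_i\notin\{2,\ldots,K\}$ and set $z_0=T_{m_{[N_0,1]}}(0)$ (with $z_0=0$ if there are none); then $T_{m_{[n,1]}}(0)=T_{m_{[n,N_0+1]}}(z_0)$ for $n>N_0$, with all indices now in $\{2,\ldots,K\}$. Applying Lemma~\ref{lem:convergence} to $\Psi_K$ with the sequence $(m_{N_0+1},m_{N_0+2},\ldots)$ and the point $z_0$ (or, more directly, using that each $T_m$ contracts by at most $\frac12$, so $\mathrm{dist}(T_{m_{[n,N_0+1]}}(z_0),B)\le 2^{-(n-N_0)}\,\mathrm{dist}(z_0,B)\to0$), every subsequential limit of $\bigl(T_{m_{[n,1]}}(0)\bigr)_n$ lies in $\overline{B}=B$, which gives the claimed bounds on $\liminf$ and $\limsup$.

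For the second part, first record the a priori bound $0\le T_{m_{[k,1]}}(0)\le\frac12$ for every $k\ge1$, which follows by induction from $T_m\bigl([0,\tfrac12]\bigr)=[\frac1{2m},\frac1m]\subseteq[0,\tfrac12]$ for $m\ge2$. Let $k_1<k_2<\cdots$ enumerate the infinitely many indices with $m_{k_j}\ge K$ and put $L=\liminf_{k\to\infty}T_{m_{[k,1]}}(0)$. For $j$ with $k_j\ge2$, since $m_{k_j}\ge K$ and $1-T_{m_{[k_j-1,1]}}(0)\ge\frac12>0$,
\[
T_{m_{[k_j,1]}}(0)=\frac{1-T_{m_{[k_j-1,1]}}(0)}{m_{k_j}}\le\frac{1-T_{m_{[k_j-1,1]}}(0)}{K}.
\]
Taking $\limsup$ over $j$ and using that $(k_j-1)_j$ is a sequence of indices, so $\liminf_j T_{m_{[k_j-1,1]}}(0)\ge L$, we obtain $L\le\limsup_j T_{m_{[k_j,1]}}(0)\le\frac1K\bigl(1-L\bigr)$, i.e. $L\le\frac1{K+1}$.

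Neither part is genuinely hard; the one point that needs a little care in the first part is identifying the correct invariant interval $B$ (equivalently, solving the coupled fixed-point/chaining conditions at the extreme maps $T_2$ and $T_K$) and then incorporating the finitely many exceptional indices without disturbing the conclusion. In the second part one only has to be careful that passing to the subsequence $(k_j)$ and comparing $\liminf$ and $\limsup$ over $j$ versus over all $k$ leaves every inequality pointing in the right direction.
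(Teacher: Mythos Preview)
Your proof is correct and follows essentially the same approach as the paper: for part~(1) you identify the same invariant interval $B=\left[\frac{1}{2K-1},\frac{K-1}{2K-1}\right]$, verify $T_m(B)\subseteq B$ for $2\le m\le K$ by the same computation, and invoke Lemma~\ref{lem:convergence}; for part~(2) the paper argues by contradiction (if $\liminf>\frac{1}{K+1}$ then the recursion forces $m_n\le K-1$ eventually), whereas you take the direct route of passing to the subsequence $(k_j)$ with $m_{k_j}\ge K$ and comparing $\liminf$ and $\limsup$, but the underlying inequality $T_{m_{[k,1]}}(0)\le\frac{1}{m_k}(1-T_{m_{[k-1,1]}}(0))$ is identical and the two arguments are simply contrapositives of one another.
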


\begin{proof}
	To prove the first claim, it is enough to show that
	\begin{equation}\label{eq:enough}
		T_m\left(\left[\frac{1}{2K-1},\frac{K-1}{2K-1}\right]\right)\subseteq\left[\frac{1}{2K-1},\frac{K-1}{2K-1}\right]\text{ for every $2\leq m\leq K$.}
	\end{equation}
	Indeed,
	\[
	T_m\left(\left[\frac{1}{2K-1},\frac{K-1}{2K-1}\right]\right)=\left[\frac{K}{m(2K-1)},\frac{2K-2}{m(2K-1)}\right],
	\]
	where $\frac{1}{2K-1}\leq\frac{K}{m(2K-1)}$ if and only if $m\leq K$ and $\frac{2K-2}{m(2K-1)}\leq\frac{K-1}{2K-1}$ if and only if $m\geq2$.
	
	Then by \eqref{eq:containment}, \eqref{eq:enough} and Lemma~\ref{lem:convergence}, we get that $\liminf_{n\to\infty}T_{m_{[n,1]}}(0)\in\left[\frac{1}{2K-1},\frac{K-1}{2K-1}\right]$.\vskip2mm
		
	To show the last claim, let us argue by contradiction. If $\liminf_{k\to \infty}T_{m_{[k,1]}}(0)>\frac{1}{K+1}$, then there exists a $\delta>0$ such that $T_{m_{[n,1]}}(0)>\frac{1}{K+1}+\delta $ for every sufficiently large $n$. Then
	\begin{eqnarray*}
		T_{m_{[n,1]}}(0)=\frac{1}{m_n}-\frac{1}{m_n}T_{m_{[n-1,1]}}(0)< \frac{1}{m_n}-\frac{1}{m_n}(\frac{1}{K+1}+\delta).
	\end{eqnarray*} Hence, for every sufficiently large $n$ we have that $\frac{1}{K+1}+\delta <\frac{1}{m_n}-\frac{1}{m_n}(\frac{1}{K+1}+\delta)$, equivalently $\frac{1}{K+1}+\delta <\frac{1}{m_n+1}$ for sufficiently large $n$. Thus, $m_n\le K-1$ for every sufficiently large $n$, which is a contradiction.
\end{proof}

Finally, let us state a technical lemma.
\begin{lemma}\label{(v)}
 Let $(m_i)\in \{ 2,3,4\}^{\ZZ^+}$ such that if $(m_i,m_{i-1})=(4,2)$ then $m_{i-2}=2$.  Then\linebreak $\limsup_{n\to \infty} T_{m_{[n,1]}}(0)\le \frac{13}{31}$.
 \end{lemma}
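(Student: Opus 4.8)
The plan is to study the orbit $y_n:=T_{m_{[n,1]}}(0)$ directly, adopting the convention $y_0=0$ so that $y_n=T_{m_n}(y_{n-1})=\frac{1-y_{n-1}}{m_n}$ for all $n\ge1$. I would first record the trivial invariance $y_n\in[0,\tfrac12]$ for every $n\ge0$, which holds because $T_m([0,\tfrac12])=[\tfrac1{2m},\tfrac1m]\subseteq[0,\tfrac12]$ for every $m\ge2$, together with the elementary identity
\[
T_2\circ T_4\circ T_2\circ T_2(x)=\frac{13+x}{32}=:f(x),\qquad\text{so that}\qquad \mathrm{Fix}(f)=\frac{13}{31}.
\]
The role of the hypothesis is that the block $(2,2,4,2)$ is essentially the only pattern in $\underline m$ capable of driving the orbit near $\tfrac{13}{31}$; in every other situation the orbit stays comfortably below.

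I would call an index $n\ge4$ \emph{critical} if $(m_n,m_{n-1},m_{n-2})=(2,4,2)$; the hypothesis then forces $m_{n-3}=2$, so that $y_n=f(y_{n-4})$. The heart of the argument is the following \textbf{Claim}: if $n\ge4$ is not critical, then $y_n\le\frac5{12}$. This is a short case check on $(m_n,m_{n-1},m_{n-2})$ using only $0\le y_k\le\tfrac12$. If $m_n\ge3$ then $y_n\le\frac1{m_n}\le\frac13$. If $m_n=2$ and $m_{n-1}\in\{2,3\}$, then $y_{n-1}=\frac{1-y_{n-2}}{m_{n-1}}\ge\frac{1/2}{3}=\frac16$, hence $y_n=\frac{1-y_{n-1}}{2}\le\frac5{12}$. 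Finally, if $m_n=2$ and $m_{n-1}=4$, non-criticality forces $m_{n-2}\in\{3,4\}$, so $y_{n-2}\le\frac1{m_{n-2}}\le\frac13$, giving $y_{n-1}=\frac{1-y_{n-2}}{4}\ge\frac16$ and again $y_n\le\frac5{12}$.

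Granting the Claim, I would prove by strong induction on $n$ that
\[
y_n\ \le\ \frac{13}{31}+\frac{5}{62}\cdot 32^{-\lfloor n/4\rfloor}\qquad\text{for all }n\ge0 .
\]
For $n\le3$ this is just $y_n\le\tfrac12=\frac{13}{31}+\frac5{62}$. For $n\ge4$: if $n$ is not critical, the Claim gives $y_n\le\frac5{12}<\frac{13}{31}$ (since $5\cdot31<12\cdot13$); if $n$ is critical, then $f$ is increasing, $\lfloor(n-4)/4\rfloor=\lfloor n/4\rfloor-1$, and
\[
y_n=f(y_{n-4})=\frac{13+y_{n-4}}{32}\ \le\ \frac1{32}\Bigl(13+\frac{13}{31}+\frac5{62}\,32^{-\lfloor n/4\rfloor+1}\Bigr)=\frac{13}{31}+\frac5{62}\,32^{-\lfloor n/4\rfloor}.
\]
Letting $n\to\infty$ yields $\limsup_{n\to\infty}T_{m_{[n,1]}}(0)\le\frac{13}{31}$, as required.

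The only genuinely substantive point is recognising that the problem is governed by the single increasing contraction $f$ with fixed point $\tfrac{13}{31}$, and then choosing the inductive statement with exactly the right geometric correction $\frac5{62}\,32^{-\lfloor n/4\rfloor}$ — the coefficient $5/62$ being forced by $\tfrac12-\tfrac{13}{31}=\tfrac5{62}$. The Claim itself is a routine finite case analysis, and essentially the only bookkeeping care needed is for the small indices $n\le3$ and for a critical index $n$ with $n-4=0$, which is why I set $y_0=0$ at the outset. Conceptually this is a quantitative, graph-directed refinement of the invariant-interval arguments used in Lemma~\ref{lemma0b} and the proof of Theorem~\ref{thm2}, with states corresponding to the relevant suffixes of $\underline m$.
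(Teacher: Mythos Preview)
Your proof is correct. Both you and the paper identify the composition $f=T_2\circ T_4\circ T_2\circ T_2$ with fixed point $\tfrac{13}{31}$ as the governing map, but the execution differs. The paper works inside the attractor $[\tfrac17,\tfrac37]$ of $\{T_2,T_3,T_4\}$: it first observes that among all length-three blocks the image $T_2\circ T_4\circ T_2([\tfrac17,\tfrac37])$ sits above every other $T_i\circ T_j\circ T_k([\tfrac17,\tfrac37])$, then splits into two cases according to whether $(2,4,2,2)$ occurs finitely or infinitely often, obtaining in the latter case the fixed-point inequality $\limsup y_n\le f(\limsup y_n)$. Your route avoids the attractor machinery entirely: the Claim bounds every non-critical term by $\tfrac{5}{12}$ using only $y_k\in[0,\tfrac12]$, and the explicit induction with error $\tfrac{5}{62}\cdot 32^{-\lfloor n/4\rfloor}$ replaces the dichotomy. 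What you gain is a completely self-contained argument with a quantitative convergence rate and no appeal to Lemma~\ref{lem:convergence}; what the paper's version gains is a cleaner conceptual picture (comparing cylinder intervals in the attractor) that transposes directly to the analogous estimates elsewhere in Section~4.
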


\begin{proof}
	Observe that
	$$\min T_2\circ T_4\circ T_2\left(\left[\frac17,\frac37\right]\right)\geq\max\left(\bigcup_{\substack{i,j,k\in\{2,3,4\}^3\\(i,j,k)\neq(2,4,2)}}T_{i}\circ T_j\circ T_k\left(\left[\frac17,\frac37\right]\right)\right),
	$$
	where we recall that $[\frac17,\frac37]$ is the attractor of the IFS $\{T_2,T_3,T_4\}$. Thus, if $(m_i,m_{i-1},m_{i-2},m_{i-3})=(2,4,2,2)$ only for finitely many $i$ then
 $$
\limsup_{n\to\infty}T_{m_{[n,1]}}(0)\leq T_2\circ T_4\circ T_2\circ T_2\left(\frac17\right)=\frac{23}{56}<\frac{13}{31}.
 $$
 On the other hand, if $(m_i,m_{i-1},m_{i-2},m_{i-3})=(2,4,2,2)$ for infinitely many $i$ then
	$$
	\limsup_{n\to\infty}T_{m_{[n,1]}}(0)\leq T_2\circ T_4\circ T_2\circ T_2\left(\limsup_{n\to\infty}T_{m_{[n,1]}}(0)\right),
	$$
	which implies after some algebraic manipulations that $\limsup_{n\to\infty}T_{m_{[n,1]}}(0)\leq\frac{13}{31}$.
\end{proof}

\begin{proof}[Proof of Theorem \ref{thm4}]
	It follows from Lemma \ref{lemma0b}\eqref{(vii)} that if $m_n\ge 5$ for infinitely many $n$, then $$\liminf _{k\to \infty }T_{m_{[k,1]}}(0)\le \frac{1}{6}.$$ So we may assume without loss of generality that $m_n\in \{ 2,3,4\} $ for every $n\in \mathbb{Z}^+$.
	
	Direct computations show that
	$$
	\left(\frac{1}{6},\frac{1}{6}+\frac{1}{84}\right)\cap\bigcup_{\substack{(k,l)\in\{2,3,4\}^2\\(k,l)\neq(4,2)}}T_{k}\circ T_l\left(\left[\frac17,\frac37\right]\right)=\emptyset.
	$$
	Thus, by Lemma~\ref{lem:convergence} and the fact that $\left[\frac17,\frac37\right]$ is the attractor of the IFS $\{T_2,T_3,T_4\}$ we get that if
	$$
	\liminf _{k\to \infty }T_{m_{[k,1]}}(0)\in\mathcal{L}\cap \left(\frac{1}{6},\frac{1}{6}+\frac{1}{84}\right)
	$$
	with the sequence $(m_i)\in\{2,3,4\}^{\ZZ^+}$ then $(m_i,m_{i-1})=(4,2)$ for infinitely many $i\in\ZZ^+$, and in particular,
	\begin{equation}\label{eq:kell}
	\liminf _{k\to \infty }T_{m_{[k,1]}}(0)=\liminf _{\ell\to \infty }T_{m_{[k_{\ell},1]}}(0),
	\end{equation}
	where $k_1=\min\{i\geq2:(m_i,m_{i-1})=(4,2)\}$ and $k_\ell=\min\{i> k_{\ell-1}:(m_i,m_{i-1})=(4,2)\}$ for all $\ell\geq2$
	
	If $(m_{k_\ell},m_{{k_\ell}-1},m_{{k_\ell}-2})=(4,2,b)$ for some $b\in \{ 3,4\}$ for infinitely many $i$ then $\frac{1}{6}\ge T_{m_{[{k_\ell},{k_\ell}-2]}}(0)\ge T_{m_{[{k_\ell},1]}}(0)$. So we may assume that if
	\begin{equation}\label{eq:wemayassume}
	\text{$(m_i,m_{i-1})=(4,2)$ then $m_{i-2}=2$ for every $i$.}
	\end{equation}
	Furthermore, if for every $N$ there exist infinitely many $k\geq 2N+2$ such that $$(m_{k},m_{k-1},\dots ,m_{k-2N-1})=(4,2,2,\dots,2)$$ then since the map $T_{4}\circ \overbrace{T_2\circ\cdots\circ T_2}^{2N+1\text{-times}}$ is monotone increasing
	$$
	\liminf _{k\to \infty }T_{m_{[k,1]}}(0)\le\lim_{N\to\infty}T_{4}\circ \overbrace{T_2\circ\cdots\circ T_2}^{2N+1\text{-times}}(\frac12)=\frac{1}{6}.
	$$
	Hence, we may assume that there exists a non-negative integer $N_0$ such that
	\begin{equation}\label{eq:thisholds}
	\begin{split}
	&(m_{k_\ell},m_{{k_\ell}-1},\dots ,m_{{k_\ell}-2N_0-1})=(4,\overbrace{2,2,\dots,2}^{\text{$2N_0+1$-times}})\text{ for infinitely many $\ell$ but}\\
	&(m_{k_\ell},m_{{k_\ell}-1},\dots,m_{{k_\ell}-2N_0-3})=(4,\overbrace{2,2,\dots ,2}^{\text{$2N_0+3$-times}})\text{ only for a finite number of $\ell$}.
\end{split}
\end{equation}
	
Let us suppose that \eqref{eq:thisholds} holds. For short, let $p_{N_0}=(m_{k_\ell},m_{{k_\ell}-1},\dots ,m_{{k_\ell}-2N_0-1})$. Then by Lemma~\ref{(v)} and the fact that the maps $T_m$ are orientation reversing we get
	\begin{equation}\label{eq:limsup}
		\begin{split}
		\liminf_{\ell\to\infty}T_{m_{[k_\ell,1]}}(0)&\le T_{p_{N_0}}(\limsup_{k\to\infty}T_{m_{[k-2N_0-2,1]}}(0))\\
		&=\frac{1}{8}\frac{1-\frac{1}{4^{N_0+1}}}{1-\frac{1}{4}}+\frac{1}{8\cdot  4^{N_0}}\cdot\frac{13}{31}=\frac{1}{6}+\frac{1}{93}\frac{1}{4^{N_0}}.
		\end{split}
	\end{equation}
	
	If $(m_{k_\ell},m_{{k_\ell}-1},\dots,m_{{k_\ell}-2N_0-2})=(4,2,2,\dots ,2,a)$, where $a\in \{ 3,4\}$ then
	$$
	T_{m_{[{k_\ell},1]}}(0)\le T_{m_{[{k_\ell},{k_\ell}-2N_0-2]}}(0)=T_{p_{N_0}}\circ T_a(0)\leq T_{p_{N_0}}(\frac13)=\frac16.
$$
	 Hence, we may suppose that $(m_{{k_\ell}-2N_0-2},m_{{k_\ell}-2N_0-3})\in\{(2,3),(2,4)\}$. Thus, by Lemma \ref{lemma0b}\eqref{(vi)} and the fact that the maps $T_m$ are orientation reversing we get
	\begin{equation}\label{eq2}
		\begin{split}
	\liminf_{\ell\to\infty}T_{m_{[k_\ell,1]}}(0)&\ge T_{p_{N_0}}\circ T_2\circ T_{a}(\liminf_{k\to\infty}T_{m_{[k-2N_0-3,1]}}(0))\\
	&\geq T_{p_{N_0}}\circ T_2\circ T_{a}(\frac17)\geq T_{p_{N_0}}\circ T_2(\frac27)=\frac{1}{6}+\frac{1}{84}\frac{1}{4^{N_0+1}}.
\end{split}
\end{equation}
Finally, \eqref{eq2} with \eqref{eq:kell} and \eqref{eq:limsup} implies that
	$$
	 \frac{1}{6}+\frac{1}{84}\frac{1}{4^{N_0+1}}.\le \liminf_{k\to \infty}T_{m_{[k,1]}}(0)\le\frac{1}{6}+\frac{1}{93}\frac{1}{4^{N_0}},
	$$
	and so $\liminf_{k\to \infty}T^{(1)}_{m_{[k,1]}}(0) \notin \bigcup_{n=0}^{\infty }\left(\frac{1}{6}+\frac{1}{93}\frac{1}{4^n},\frac{1}{6}+\frac{1}{84}\frac{1}{4^n}\right)$.
\end{proof}

\section{\bf Computation of the Markov-like constant}

Throughout this section, we will consider the set $\mathcal{L}\cap\left(\frac15,\frac12\right)$. By Lemma~\ref{lemma0b}\eqref{(vii)}, for every $x\in\mathcal{L}\cap\left(\frac15,\frac12\right)$ if $x=\liminf_{n\to \infty}T_{m_{[n,1]}}(0)$ then $(m_i)\in\{2,3\}^{\ZZ^+}$. By \eqref{eq:enough},
$$
T_2\left(\left[\frac15,\frac25\right]\right)\cup T_3\left(\left[\frac15,\frac25\right]\right)\subseteq \left[\frac15,\frac25\right],
$$
and so by denoting the attractor of the IFS $\{T_2,T_3\}$ by $\Lambda\subset\left[\frac15,\frac25\right]$, we get by Lemma~\ref{lem:convergence} that
\begin{equation}\label{eq:containment2}
	\mathcal{L}\cap\left[\frac15,\frac25\right]\subset\Lambda.
\end{equation}

Furthermore, direct computations show that
\begin{equation}\label{eq:separation2}
	T_2\left(\left[\frac15,\frac25\right]\right)\cap T_3\left(\left[\frac15,\frac25\right]\right)=\emptyset.
\end{equation}
Hence, by choosing $\delta=\mathrm{dist}\left(T_2\left(\left[\frac15,\frac25\right]\right),T_3\left(\left[\frac15,\frac25\right]\right)\right)/3>0$, we get
\begin{equation}\label{eq:separation}
T_2\left(J\right)\cap T_3\left(J\right)=\emptyset\text{ and }T_2\left(J\right)\cup T_3\left(J\right)\subseteq J,
\end{equation}
where $J=\left[\frac15-\delta,\frac25+\delta\right]$.

\begin{lemma}\label{lem:oddeven}\mbox{}
	\begin{enumerate}
		\item\label{it:odd} Let $(i_1,\ldots,i_{2k+1})\in\ZZ_{\geq2}^{\ZZ^+}$ for some non-negative integer $k$, and let $(m_i)\in\ZZ_{\geq2}^{\ZZ^+}$ be such that
	$(m_{j},\ldots,m_{j-2k})=(i_1,\ldots,i_{2k+1})$ for infinitely many $j$. Then
	$$
	\liminf_{n\to\infty}T_{m_{[n,1]}}(0)\leq\mathrm{Fix}(T_{i_1}\circ\cdots\circ T_{i_{2k+1}}).
	$$
	
	\item\label{it:even}  Let $x\in\mathcal{L}\cap\left(\frac15,\frac25\right)$ and let $(i_1,\ldots,i_{2k})\in\{2,3\}^{2k}$ be such that $x\in T_{i_1}\circ\cdots\circ T_{i_{2k}}\left(\left[\frac15,\frac25\right]\right)$. Then
	$$
	x\geq\mathrm{Fix}(T_{i_1}\circ\cdots\circ T_{i_{2k}}).
	$$
	\end{enumerate}
\end{lemma}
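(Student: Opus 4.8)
\emph{Proof plan.} Both parts hinge on the affine structure of the $T_m$ together with the separation set-up \eqref{eq:separation}. Write any composition $T_{\mathbf{i}}=T_{i_1}\circ\cdots\circ T_{i_p}$ as an affine map $T_{\mathbf{i}}(x)=\alpha x+\beta$; then $\alpha=(-1)^p/(i_1\cdots i_p)$, so $0<|\alpha|\le 2^{-p}<1$, and $T_{\mathbf{i}}$ is orientation reversing when $p$ is odd and orientation preserving when $p$ is even, in either case with unique fixed point $\mathrm{Fix}(T_{\mathbf{i}})=\beta/(1-\alpha)$. Since $T_{\mathbf{i}}(x)-x=(\alpha-1)(x-\mathrm{Fix}(T_{\mathbf{i}}))$ and $\alpha-1<0$, we have
\[
T_{\mathbf{i}}(x)\ge x\iff x\le \mathrm{Fix}(T_{\mathbf{i}}),\qquad\text{and}\qquad T_{\mathbf{i}}(x)\le x\iff x\ge \mathrm{Fix}(T_{\mathbf{i}}).
\]
I will use these two equivalences to finish each part.

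For the first claim, set $b=\liminf_{n\to\infty}T_{m_{[n,1]}}(0)$ and $f=T_{i_1}\circ\cdots\circ T_{i_{2k+1}}$, which is orientation reversing. Let $j_1<j_2<\cdots$ be the infinitely many indices, which we may take $\ge 2k+2$, with $(m_{j_\ell},\ldots,m_{j_\ell-2k})=(i_1,\ldots,i_{2k+1})$; then $T_{m_{[j_\ell,1]}}(0)=f(T_{m_{[j_\ell-2k-1,1]}}(0))$. Fix $\varepsilon>0$. For $\ell$ large we have $T_{m_{[j_\ell-2k-1,1]}}(0)>b-\varepsilon$, hence, $f$ being decreasing, $T_{m_{[j_\ell,1]}}(0)<f(b-\varepsilon)$; since $\{j_\ell\}$ is an infinite set of indices this yields $b\le\liminf_{\ell\to\infty}T_{m_{[j_\ell,1]}}(0)\le f(b-\varepsilon)$. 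Letting $\varepsilon\to0$ gives $b\le f(b)$, so $b\le\mathrm{Fix}(f)$ by the equivalences above.

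For the second claim, write $\mathbf{i}=(i_1,\ldots,i_{2k})$ and $g=T_{\mathbf{i}}$, which is orientation preserving, and recall that here $(m_i)\in\{2,3\}^{\ZZ^+}$. By \eqref{eq:separation} the level-$2k$ cylinders $T_{\mathbf{j}}(J)$, $\mathbf{j}\in\{2,3\}^{2k}$, are pairwise disjoint, and since $[\frac15,\frac25]\subset\mathrm{int}(J)$ the set $g([\frac15,\frac25])$ lies in the interior of $T_{\mathbf{i}}(J)$. Pick $n_\ell\to\infty$ with $T_{m_{[n_\ell,1]}}(0)\to x$. For $\ell$ large, $T_{m_{[n_\ell,1]}}(0)$ lies in the interior of $T_{\mathbf{i}}(J)$; on the other hand, by Lemma~\ref{lem:convergence} we have $T_{m_{[n_\ell-2k,1]}}(0)\in J$ for $\ell$ large, so $T_{m_{[n_\ell,1]}}(0)\in T_{(m_{n_\ell},\ldots,m_{n_\ell-2k+1})}(J)$. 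Disjointness of the cylinders then forces $(m_{n_\ell},\ldots,m_{n_\ell-2k+1})=\mathbf{i}$, that is, $T_{m_{[n_\ell,1]}}(0)=g(w_\ell)$ with $w_\ell:=T_{m_{[n_\ell-2k,1]}}(0)$. As $g$ is an affine bijection, $w_\ell=g^{-1}(T_{m_{[n_\ell,1]}}(0))\to g^{-1}(x)=:y$, so $x=g(y)$. Moreover $w_\ell\ge x-o(1)$ since $\liminf_pT_{m_{[p,1]}}(0)=x$ and $n_\ell-2k\to\infty$, whence $y=\lim_\ell w_\ell\ge x$; as $g$ is increasing, $x=g(y)\ge g(x)$, and therefore $x\ge\mathrm{Fix}(g)$ by the equivalences above.

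The affine identity for $T_{\mathbf{i}}(x)-x$ and the $\liminf$ manipulations are routine. The one substantive point, and the step I expect to need the most care, is the combinatorial localization used in the second claim: an approximating composition $T_{m_{[n_\ell,1]}}(0)$ converging to a point of $g([\frac15,\frac25])$ must, for large $\ell$, use exactly the block $\mathbf{i}$ in its top $2k$ coordinates. This is precisely where the separation \eqref{eq:separation} together with $[\frac15,\frac25]\subset\mathrm{int}(J)$ are essential; everything else is bookkeeping.
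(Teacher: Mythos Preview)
Your proof is correct and follows essentially the same approach as the paper. Both parts reduce to showing $b\le f(b)$ (resp.\ $x\ge g(x)$) via the orientation of the odd/even composition and then reading off the comparison with the fixed point; your localization argument in part~(2), using separation~\eqref{eq:separation} and $[\tfrac15,\tfrac25]\subset\mathrm{int}(J)$ to force the top-$2k$ block to equal $\mathbf{i}$ along a subsequence realizing the liminf, is exactly the mechanism the paper uses (the paper instead takes $n_\ell$ to range over \emph{all} indices with top block $\mathbf{i}$ and asserts $\liminf_n=\liminf_{n_\ell}$, which is the same observation phrased slightly differently).
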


\begin{proof}
	Let us show the first claim. Let $j_\ell$ be the sequence such that $(m_{j_\ell},\ldots,m_{j_\ell-2k})=(i_1,\ldots,i_{2k+1})$. Since the maps $T_m$ are orientation reversing,
	\begin{align*}
		\liminf_{n\to\infty}T_{m_{[n,1]}}(0)&\leq\liminf_{\ell\to\infty}T_{m_{[j_\ell,1]}}(0)\\
		&=T_{i_1}\circ\cdots\circ T_{i_{2k+1}}(\limsup_{\ell\to\infty}T_{m_{[j_\ell-2k-1,1]}}(0))\\
		&\leq T_{i_1}\circ\cdots\circ T_{i_{2k+1}}(\liminf_{n\to\infty}T_{m_{[n,1]}}(0)).
	\end{align*}
	Let us denote the fixed point of $T_{i_1}\circ\cdots\circ T_{i_{2k+1}}$ by $x_0$ and, for short, let $x=\liminf_{n\to\infty}T_{m_{[n,1]}}(0)$. Since the maps $T_m$ are linear and contracting, we get
	\[
	x-x_0\leq T_{i_1}\circ\cdots\circ T_{i_{2k+1}}(x)-T_{i_1}\circ\cdots\circ T_{i_{2k+1}}(x_0)=\frac{-1}{i_1\cdots i_{2k+1}}(x-x_0), \]
	thus the claim follows.\vskip2mm
	
	Now we turn to the second claim. Let $x\in\mathcal{L}\cap\left(\frac15,\frac25\right)$ be such that $x\in T_{i_1}\circ\cdots\circ T_{i_{2k}}\left(\left[\frac15,\frac25\right]\right)$. Suppose that $x=\liminf_{n\to\infty}T_{m_{[n,1]}}(0)$. By \eqref{eq:separation}
	\begin{equation}\label{eq:sep2}
	\mathrm{dist}\left(T_{i_1}\circ\cdots\circ T_{i_{2k}}\left(J\right),\bigcup_{\substack{(j_1,\ldots,j_{2k})\in\{2,3\}^{2k}\\(j_1,\ldots,j_{2k})\neq(i_1,\ldots,i_{2k})}}T_{j_1}\circ\cdots\circ T_{j_{2k}}\left(J\right)\right)>0.
	\end{equation}
	By Lemma~\ref{lem:convergence}, for every sufficiently large $n$,  $T_{m_{[n,1]}}(0)\in J$. Then by \eqref{eq:separation}, for every sufficiently large $n$,
	$T_{m_{[n,1]}}(0)\in T_{m_{[n,n-2k+1]}}(J)$. Hence, by \eqref{eq:sep2}, $T_{m_{[n,1]}}(0)\in T_{i_1}\circ\cdots\circ T_{i_{2k}}\left(J\right)$ if and only if $(m_{n},\ldots,m_{n-2k+1})=(i_1,\ldots,i_{2k})$, and so by our assumption on $x\in\mathcal{L}\cap T_{i_1}\circ\cdots\circ T_{i_{2k}}\left(\left[\frac15,\frac25\right]\right)$
	$$
	\liminf_{n\to\infty}T_{m_{[n,1]}}(0)=\liminf_{\ell\to\infty}T_{m_{[n_\ell,1]}}(0),
	$$
	where $n_\ell$ is the sequence such that $(m_{n_\ell},\ldots,m_{n_\ell-2k+1})=(i_1,\ldots,i_{2k})$. Since $T_m$ is orientation reversing
	\begin{align*}
	\liminf_{n\to\infty}T_{m_{[n,1]}}(0)&=\liminf_{\ell\to\infty}T_{m_{[n_\ell,1]}}(0)=T_{i_1}\circ\cdots\circ T_{i_{2k}}\left(\liminf_{\ell\to\infty}T_{m_{[n_\ell-2k,1]}}(0)\right)\\
	&\geq T_{i_1}\circ\cdots\circ T_{i_{2k}}\left(\liminf_{n\to\infty}T_{m_{[n,1]}}(0)\right).
	\end{align*}
	Thus the statement follows similarly than the first claim.
\end{proof}

Let us recall the definition of the sequences $M^{(n)}$. Let $M^{(1)}=2$, $M^{(2)}=3$ and let $M^{(n)}$ be the concatenation
\begin{equation}\label{eq:defM}
M^{(n)}=M^{(n-1)}M^{(n-2)}M^{(n-2)}\text{ for $n\ge 3$.}
\end{equation}
By definition, the length $l_n$ of $M^{(n)}$ satisfies the equation $l_n=l_{n-1}+2l_{n-2}$ for every $n\geq3$ with $l_1=l_2=1$, which implies a standard calculation that $l_n=\frac{2^n-(-1)^n}{3}$.

We say for any two compact intervals $[a,b]$ and $[c,d]$ that $[a,b]\prec[c,d]$ if $b<c$.

For any two closed intervals $[a,b]$ and $[c,d]$ with $[a,b]\cap[c,d]=\emptyset$, let $\mathrm{mid}([a,b],[c,d])$ be the closure of the bounded component of $\mathbb{R}\setminus\left([a,b]\cup[c,d]\right)$.

\begin{lemma}\label{lem:propM} For every $n\geq3$, $T_{M^{(n)}}(J)\subset T_{M^{(n-1)}}(J)$. Furthermore,\linebreak $T_{M^{(n)}}(J)\cap T_{M^{(n-1)}M^{(n-1)}}(J)=\emptyset$, $T_{M^{(n)}}(J)\prec T_{M^{(n-1)}M^{(n-1)}}(J)$  and\linebreak $\Lambda\cap\mathrm{mid}(T_{M^{(n)}}(J),T_{M^{(n-1)}M^{(n-1)}}(J))=\emptyset$ for every $n\geq2$. That is, there is no element of $\Lambda$ in-between $T_{M^{(n)}}(J)$ and $T_{M^{(n-1)}M^{(n-1)}}(J)$ for every $n\geq2$.
\end{lemma}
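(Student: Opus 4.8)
The plan is to prove the four claims in order, resting on two structural observations. First, by the recursion \eqref{eq:defM} and the base cases $M^{(1)}=2$, $M^{(2)}=3$, every $M^{(n)}$ is a finite word over the alphabet $\{2,3\}$; combined with \eqref{eq:separation} this gives (by an immediate induction on word length) that $T_{\mathbf{w}}(J)\subseteq J$ for every finite word $\mathbf{w}$ over $\{2,3\}$, that the cylinders $\{T_{\mathbf{w}}(J):\mathbf{w}\in\{2,3\}^{k}\}$ are pairwise disjoint for each fixed $k$ (induction on $k$ using $T_2(J)\cap T_3(J)=\emptyset$ and injectivity of $T_2,T_3$), and hence, iterating $\Lambda=T_2(\Lambda)\cup T_3(\Lambda)$ (see \eqref{eq:lambda}), that $\Lambda\cap T_{\mathbf{w}}(J)=T_{\mathbf{w}}(\Lambda)$ for every such $\mathbf{w}$. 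Second, $l_n$ is odd for every $n$, since $l_n=l_{n-1}+2l_{n-2}$ with $l_1=l_2=1$; consequently $T_{M^{(n)}}$ is orientation reversing while $T_{M^{(n-1)}M^{(n-1)}}$ is orientation preserving. With this in hand the first claim is immediate: by \eqref{eq:defM}, $T_{M^{(n)}}(J)=T_{M^{(n-1)}}\bigl(T_{M^{(n-2)}M^{(n-2)}}(J)\bigr)\subseteq T_{M^{(n-1)}}(J)$, because $T_{M^{(n-2)}M^{(n-2)}}(J)\subseteq J$.

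Next I would establish the ordering $T_{M^{(n)}}(J)\prec T_{M^{(n-1)}M^{(n-1)}}(J)$ for all $n\geq2$ by induction on $n$; the disjointness statement is then automatic, since $\prec$ already forces the two intervals apart. For $n=2$ the claim reads $T_3(J)\prec T_{22}(J)$: as $T_3(x)<T_2(x)$ for every $x\in J\subset(0,1)$ and $T_2(J)\cap T_3(J)=\emptyset$ by \eqref{eq:separation}, the interval $T_3(J)$ lies entirely to the left of $T_2(J)$, hence also to the left of $T_{22}(J)=T_2(T_2(J))\subseteq T_2(J)$. For the step ($n\geq3$), write $T_{M^{(n)}}(J)=T_{M^{(n-1)}}\bigl(T_{M^{(n-2)}M^{(n-2)}}(J)\bigr)$ and $T_{M^{(n-1)}M^{(n-1)}}(J)=T_{M^{(n-1)}}\bigl(T_{M^{(n-1)}}(J)\bigr)$; since $T_{M^{(n-1)}}$ is an orientation-reversing affine bijection, $T_{M^{(n)}}(J)\prec T_{M^{(n-1)}M^{(n-1)}}(J)$ is equivalent to $T_{M^{(n-1)}}(J)\prec T_{M^{(n-2)}M^{(n-2)}}(J)$, which is exactly the ordering claim at level $n-1$. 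This closes the induction, and from now on we may use the ordering at all levels $n\geq2$.

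For the claim about $\Lambda$ I would again induct on $n\geq2$. When $n=2$, $\Lambda=T_2(\Lambda)\cup T_3(\Lambda)\subseteq[\tfrac15,\tfrac{4}{15}]\cup[\tfrac{3}{10},\tfrac25]$ (using $\Lambda\subseteq[\tfrac15,\tfrac25]$), while a direct computation of the endpoints shows $\mathrm{mid}(T_3(J),T_{22}(J))\subseteq(\tfrac{4}{15},\tfrac{3}{10})$, so the two sets are disjoint. For the step ($n\geq3$) set $g=T_{M^{(n-1)}}$. By the first claim $T_{M^{(n)}}(J)\subseteq g(J)$, and $T_{M^{(n-1)}M^{(n-1)}}(J)=g(g(J))\subseteq g(J)$, so the middle gap $\mathrm{mid}\bigl(T_{M^{(n)}}(J),T_{M^{(n-1)}M^{(n-1)}}(J)\bigr)$ is contained in $g(J)$. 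Intersecting with $\Lambda$ and using the cylinder identity $\Lambda\cap g(J)=g(\Lambda)$ from the first paragraph (with $\mathbf{w}=M^{(n-1)}$, $k=l_{n-1}$) together with injectivity of $g$, it remains to identify the $g$-preimage of this gap. Since $g^{-1}\bigl(T_{M^{(n)}}(J)\bigr)=T_{M^{(n-2)}M^{(n-2)}}(J)$ and $g^{-1}\bigl(T_{M^{(n-1)}M^{(n-1)}}(J)\bigr)=g(J)=T_{M^{(n-1)}}(J)$, and since $g^{-1}$ reverses orientation while $T_{M^{(n-1)}}(J)\prec T_{M^{(n-2)}M^{(n-2)}}(J)$ (ordering at level $n-1$) and $T_{M^{(n)}}(J)\prec T_{M^{(n-1)}M^{(n-1)}}(J)$ (ordering at level $n$), the gap is carried by $g^{-1}$ precisely onto $\mathrm{mid}\bigl(T_{M^{(n-1)}}(J),T_{M^{(n-2)}M^{(n-2)}}(J)\bigr)$. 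Therefore
\[
\Lambda\cap\mathrm{mid}\bigl(T_{M^{(n)}}(J),T_{M^{(n-1)}M^{(n-1)}}(J)\bigr)=g\Bigl(\Lambda\cap\mathrm{mid}\bigl(T_{M^{(n-1)}}(J),T_{M^{(n-2)}M^{(n-2)}}(J)\bigr)\Bigr),
\]
and the right-hand set is empty by the inductive hypothesis at level $n-1$.

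The main obstacle is bookkeeping rather than a genuine difficulty: one must check carefully that conjugation by the orientation-reversing map $T_{M^{(n-1)}}$ transforms the gap statement at level $n$ into the very same gap statement at level $n-1$, and similarly for the ordering statement — this indexing coincidence is precisely where the recursion $M^{(n)}=M^{(n-1)}M^{(n-2)}M^{(n-2)}$ is used, and one must keep track of parities so that the orientation-reversing/preserving dichotomy is applied consistently. The base cases at $n=2$ reduce to elementary interval arithmetic with the $\delta$ fixed earlier.
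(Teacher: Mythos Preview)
Your argument is correct and follows essentially the same route as the paper: both prove the ordering and the $\Lambda$-gap by induction, using that conjugation by the orientation-reversing map $T_{M^{(n-1)}}$ (equivalently $T_{M^{(n)}}$ after an index shift) carries the level-$n$ statement back to the level-$(n-1)$ statement via the recursion $M^{(n)}=M^{(n-1)}M^{(n-2)}M^{(n-2)}$. Your write-up is a bit more explicit about the supporting facts (the cylinder identity $\Lambda\cap T_{\mathbf{w}}(J)=T_{\mathbf{w}}(\Lambda)$ and the oddness of $l_n$), and you separate the ordering and gap inductions rather than running them together, but these are organizational rather than substantive differences.
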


\begin{proof}
By \eqref{eq:defM} and \eqref{eq:separation}, clearly $T_{M^{(n)}}(J)\subset T_{M^{(n-1)}}(J)$. We prove the second claim by induction. Clearly, by \eqref{eq:separation} $T_3(J)\cap T_{2,2}(J)=\emptyset$, furthermore, since
$$
\Lambda\cap\left(\left[\frac15,\frac25\right]\setminus\left(T_2(\left[\frac15,\frac25\right])\cup T_3(\left[\frac15,\frac25\right])\right)\right)=\emptyset
$$
and
$$
\left[\frac15,\frac25\right]\setminus\left(T_2(\left[\frac15,\frac25\right])\cup T_3(\left[\frac15,\frac25\right])\right)\supset\mathrm{mid}(T_{2,2}(J),T_3(J))
$$
the claim holds for $n=2$.

Let us suppose that the claim holds for $n$. Then
$$
T_{M^{(n+1)}}(J)\cap T_{M^{(n)}M^{(n)}}(J)=T_{M^{(n)}}\left(T_{M^{(n-1)}M^{(n-1)}}(J)\cap T_{M^{(n)}}(J)\right)=\emptyset,
$$
moreover, since $T_{M^{(n)}}$ is orientation reversing, $T_{M^{(n)}}(J)\prec T_{M^{(n-1)}M^{(n-1)}}(J)$ implies that
$$
T_{M^{(n+1)}}(J)=T_{M^{(n)}}\left(T_{M^{(n-1)}M^{(n-1)}}(J)\right)\prec T_{M^{(n)}}\left(T_{M^{(n)}}(J)\right).
$$
Observe that
$$
\mathrm{mid}(T_{M^{(n+1)}}(J),T_{M^{(n)}M^{(n)}}(J))=T_{M^{(n)}}\left(\mathrm{mid}(T_{M^{(n-1)}M^{(n-1)}}(J),T_{M^{(n)}}(J))\right)\subset T_{M^{(n)}}(J)
$$
and so by \eqref{eq:separation}
$$
\Lambda\cap\mathrm{mid}(T_{M^{(n+1)}}(J),T_{M^{(n)}M^{(n)}}(J))=T_{M^{(n)}}(\Lambda)\cap T_{M^{(n)}}\left(\mathrm{mid}(T_{M^{(n-1)}M^{(n-1)}}(J),T_{M^{(n)}}(J))\right)=\emptyset.
$$
\end{proof}

Let us recall the definition of the sequence $\lambda_n$ and $\lambda_0$. For every $n\geq1$, let $\lambda_n=\mathrm{Fix}(T_{M^{(n)}})$. Since $\mathrm{Fix}(T_{M^{(n)}})\in T_{M^{(n)}M^{(n)}}(J)\subset T_{M^{(n)}}(J)$, by Lemma~\ref{lem:propM} we get
$$
\lambda_{n+1}<\lambda_n.
$$
Thus, the sequence $\lambda_n$ is convergent. Let us denote the limit $\lim_{n\to\infty}\lambda_n$ by $\lambda_0$. Then by $T_{M^{(n)}}(J)\subset T_{M^{(n-1)}}(J)$, we get that
$$
\lambda_0=\Pi(\underline{M}),
$$
where $\underline{M}=\lim_{n\to\infty}M^{(n)}=(M_1,M_2,\ldots)$ is the limiting sequence defined such that\linebreak $(M_1,M_2,\dots,M_{l_n})=M^{(n)}$ for every positive integer $n\ge 2$. So
\begin{eqnarray*}
	\lambda_0=\sum_{l=1}^{\infty}(-1)^{l-1}\frac{1}{M_1M_2\dots M_l}=0.2293\dots.
\end{eqnarray*}

First, we show the following proposition:
\begin{proposition}\label{prop:lambdainL}
	 $\{\lambda_1,\lambda_2,\ldots\}\subset\mathcal{L}\cap\left(\frac15,\frac25\right)$. In particular, $\lambda_0\in\mathcal{L}$.
\end{proposition}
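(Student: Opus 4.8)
The plan is to split the statement into three pieces. The location $\lambda_n\in(\frac15,\frac25)$ is the easiest: since $M^{(n)}$ is a word over $\{2,3\}$ and each $T_i$ maps $\Lambda$ into itself, $\lambda_n=\mathrm{Fix}(T_{M^{(n)}})\in\Lambda\subset[\frac15,\frac25]$; by Lemma~\ref{lem:propM} the sequence $(\lambda_n)_{n\ge1}$ is strictly decreasing with limit $\lambda_0$, and estimating the alternating series $\lambda_0=\sum_l(-1)^{l-1}/(M_1\cdots M_l)$ by its partial sums gives $\lambda_0>\frac29>\frac15$, while $\lambda_1=\mathrm{Fix}(T_2)=\frac13<\frac25$; hence $\frac15<\lambda_n\le\frac13<\frac25$ for every $n\ge1$. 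Granting the inclusion $\{\lambda_1,\lambda_2,\dots\}\subset\mathcal{L}$, the ``in particular'' clause is immediate, since $\lambda_n\to\lambda_0$ and $\mathcal L$ is closed by Theorem~\ref{thm5}.

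The core is to show $\lambda_n\in\mathcal L$ for a fixed $n\ge1$. Write $M^{(n)}=(a_1,\dots,a_{l_n})$ and take $\underline m$ to be the $l_n$-periodic sequence whose period is the reversed word $(a_{l_n},a_{l_n-1},\dots,a_1)$. Unwinding the composition, for $k=jl_n+r$ with $0\le r<l_n$ one gets
\[
T_{m_{[k,1]}}(0)=T_{S_r}\bigl((T_{M^{(n)}})^{j}(0)\bigr),\qquad S_r:=(a_{l_n-r+1},\dots,a_{l_n}),
\]
where $S_0$ is empty; since $(T_{M^{(n)}})^{j}(0)\to\lambda_n$, the sequence $\bigl(T_{m_{[k,1]}}(0)\bigr)_k$ is a finite union of convergent subsequences and its set of accumulation points is exactly $\{T_{S_r}(\lambda_n):0\le r<l_n\}$, with $T_{S_0}(\lambda_n)=\lambda_n$. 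Therefore $\liminf_k T_{m_{[k,1]}}(0)=\min_{0\le r<l_n}T_{S_r}(\lambda_n)$, which is trivially $\le\lambda_n$ (this upper bound can also be read off from Lemma~\ref{lem:oddeven}\eqref{it:odd}, since $l_n$ is odd and $M^{(n)}$ occurs infinitely often backwards in $\underline m$). So $\lambda_n\in\mathcal L$ follows once we prove the lower bound, i.e. the \emph{Key Claim}: $T_S(\lambda_n)\ge\lambda_n$ for every suffix $S$ of $M^{(n)}$.

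I would prove the Key Claim by induction on $n$ along the recursion $M^{(n)}=M^{(n-1)}M^{(n-2)}M^{(n-2)}$, the cases $n\le 3$ being a direct check. For the step, a proper suffix $S$ of $M^{(n)}$ begins in the last $M^{(n-2)}$-block, in the middle $M^{(n-2)}$-block, or in the $M^{(n-1)}$-block, so $S$ is, respectively, a suffix of $M^{(n-2)}$, a suffix of $M^{(n-2)}$ followed by $M^{(n-2)}$, or a suffix of $M^{(n-1)}$ followed by $M^{(n-2)}M^{(n-2)}$. Using the fixed-point identity $\lambda_n=T_{M^{(n-1)}}\circ T_{M^{(n-2)}}\circ T_{M^{(n-2)}}(\lambda_n)$, the fact that $T_w$ is an affine bijection whose orientation is governed by the parity of $|w|$ (here $l_{n-1}$ and $l_{n-2}$ are both odd), and the separation statements of Lemma~\ref{lem:propM} — in particular that no point of $\Lambda$ lies in $\mathrm{mid}(T_{M^{(n)}}(J),T_{M^{(n-1)}M^{(n-1)}}(J))$ — one pins down on which side of $\lambda_n$ the point $T_S(\lambda_n)$ falls in each of the three cases and reduces to the inductive hypotheses for $n-1$ and $n-2$. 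Conceptually the Key Claim says that the periodic word $(M^{(n)})^\infty$ is the least of its cyclic shifts in the alternating (IFS-induced) order — a ``twisted Lyndon'' property — and Lemma~\ref{lem:propM} is precisely the geometric packaging of this combinatorial fact. This inductive bookkeeping — tracking orientations of the three blocks and matching the three suffix types to the correct inequalities from Lemma~\ref{lem:propM} — is the only real obstacle; everything else (the accumulation-point computation, $\lambda_n\in\Lambda$, monotonicity, and the passage to $\lambda_0$) is routine.
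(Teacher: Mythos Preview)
Your overall architecture matches the paper's exactly: take the $l_n$-periodic sequence on the reversed word $M^{(n)}$, identify the accumulation points as $\{T_{S_r}(\lambda_n)\}$, and reduce everything to the ``Key Claim'' that $\lambda_n$ is least among these. The paper packages that Key Claim as Lemma~\ref{lem:fixgood}, stated not for the points $T_S(\lambda_n)$ but for the whole cylinder intervals: $T_{M^{(n)}}(J)\prec T_{\sigma^\ell M^{(n)}}(J)$ for $1\le\ell<l_n$. The routine parts of your write-up (location in $(\tfrac15,\tfrac25)$, passage to $\lambda_0$ via Theorem~\ref{thm5}) are fine.

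The gap is in your induction for the Key Claim. Your inductive hypotheses are inequalities of the form $T_{S'}(\lambda_{n-1})\ge\lambda_{n-1}$ and $T_{S'}(\lambda_{n-2})\ge\lambda_{n-2}$, but the goal concerns $\lambda_n$, a different point. For an odd-length suffix $S$ of the last $M^{(n-2)}$-block the orientation reversal plus $\lambda_n<\lambda_{n-2}$ does give $T_S(\lambda_n)>T_S(\lambda_{n-2})\ge\lambda_{n-2}>\lambda_n$; but for even $|S|$ the map $T_S$ is orientation preserving and $T_S(\lambda_n)<T_S(\lambda_{n-2})$, so the hypothesis $T_S(\lambda_{n-2})\ge\lambda_{n-2}$ tells you nothing about whether $T_S(\lambda_n)\ge\lambda_n$. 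The cases where $S$ straddles two or three blocks are worse: after unwinding with the fixed-point identity you end up needing inequalities about \emph{prefixes} of $M^{(n-1)}$ applied to $\lambda_n$, which are neither in your inductive hypothesis nor supplied by Lemma~\ref{lem:propM} (that lemma only separates $T_{M^{(n)}}(J)$ from $T_{M^{(n-1)}M^{(n-1)}}(J)$, not arbitrary cylinders).

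The paper fixes exactly this by strengthening the inductive statement to intervals and isolating a one-line ``prefix lemma'' (Lemma~\ref{lem:prefix}): if $T_{\underline a'}(J)\prec T_{\underline b'}(J)$ for some prefixes, then $T_{\underline a}(J)\prec T_{\underline b}(J)$. With this, the induction for Lemma~\ref{lem:fixgood} becomes bookkeeping: for each range of $\ell$ one exhibits a prefix of $\sigma^\ell M^{(n+1)}$ that is a cyclic shift of a shorter $M^{(k)}$ (using both decompositions $M^{(n+1)}=M^{(n)}M^{(n-1)}M^{(n-1)}$ and $M^{(n+1)}=M^{(n-1)}M^{(n-2)}M^{(n-2)}M^{(n-2)}M^{(n-3)}M^{(n-3)}M^{(n-1)}$), and the interval inequality is inherited verbatim from the shorter word --- no change of base point is needed. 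Your ``twisted Lyndon'' intuition is correct, but to make the induction close you should carry the interval ordering $T_{M^{(n)}}(J)\prec T_{\sigma^\ell M^{(n)}}(J)$ rather than the pointwise inequality $T_S(\lambda_n)\ge\lambda_n$.
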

Before we turn to its proof, we require the following technical lemmas.

\begin{lemma}\label{lem:prefix}
	Let $\underline{a}=(a_1,\ldots,a_k)$ and $\underline{b}=(b_1,\ldots,b_n)$ be finite sequences formed by the integers $\{2,3\}$. Suppose that there exist a prefix $\underline{a}'=(a_1,\ldots,a_{k'})$ of $\underline{a}$ with $k'\leq k$ and a prefix $\underline{b}'=(b_1,\ldots,b_{n'})$ of $\underline{b}$ with $n'\leq n$ such that $T_{\underline{a}'}(J)\prec T_{\underline{b}'}(J)$. Then $T_{\underline{a}}(J)\prec T_{\underline{b}}(J)$.
\end{lemma}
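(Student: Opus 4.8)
The plan is to reduce the statement to the single invariance fact recorded in \eqref{eq:separation}, namely $T_2(J)\cup T_3(J)\subseteq J$, together with the elementary remark that appending letters to a word over $\{2,3\}$ can only \emph{shrink} the image of $J$. Concretely, I first claim that $T_{\underline{a}}(J)\subseteq T_{\underline{a}'}(J)$ whenever $\underline{a}'$ is a prefix of $\underline{a}$ (and symmetrically for $\underline{b}$). To see this, write $\underline{a}=\underline{a}'\underline{c}$ with $\underline{c}=(a_{k'+1},\ldots,a_k)$, so that $T_{\underline{a}}=T_{\underline{a}'}\circ T_{\underline{c}}$. Peeling the letters of $\underline{c}$ off from the right and applying \eqref{eq:separation} at each step gives $T_{\underline{c}}(J)\subseteq J$ (this is vacuous if $\underline{c}$ is empty, i.e.\ if $k'=k$). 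Since $T_{\underline{a}'}$ is an affine, hence injective, self-map of $\RR$, it preserves inclusions of sets, so $T_{\underline{a}}(J)=T_{\underline{a}'}\left(T_{\underline{c}}(J)\right)\subseteq T_{\underline{a}'}(J)$.

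With the two inclusions $T_{\underline{a}}(J)\subseteq T_{\underline{a}'}(J)$ and $T_{\underline{b}}(J)\subseteq T_{\underline{b}'}(J)$ in hand, the conclusion is immediate: all four sets are compact intervals, the hypothesis $T_{\underline{a}'}(J)\prec T_{\underline{b}'}(J)$ means $\sup T_{\underline{a}'}(J)<\inf T_{\underline{b}'}(J)$, and therefore
\[
\sup T_{\underline{a}}(J)\leq\sup T_{\underline{a}'}(J)<\inf T_{\underline{b}'}(J)\leq\inf T_{\underline{b}}(J),
\]
which is exactly $T_{\underline{a}}(J)\prec T_{\underline{b}}(J)$.

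There is no genuine obstacle in this argument; the only subtlety worth flagging is that one should \emph{not} try to conclude by saying that $T_{\underline{a}'}$ ``preserves the order'' $\prec$, since a composition of the orientation-reversing maps $T_m(x)=\frac{1-x}{m}$ need not be orientation preserving. Passing through the set-theoretic inclusion $T_{\underline{a}}(J)\subseteq T_{\underline{a}'}(J)$, which holds for any injection regardless of orientation, avoids this entirely, and the restriction of $\underline{a},\underline{b}$ to letters in $\{2,3\}$ is used precisely so that $T_m(J)\subseteq J$ is available from \eqref{eq:separation}.
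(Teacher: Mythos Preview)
Your proof is correct and follows essentially the same approach as the paper's: both arguments reduce to the observation that $T_{\underline{a}}(J)\subseteq T_{\underline{a}'}(J)$ and $T_{\underline{b}}(J)\subseteq T_{\underline{b}'}(J)$, and then note that if $C\subseteq A$, $D\subseteq B$ are compact intervals with $A\prec B$, then $C\prec D$. Your version simply spells out the inclusion step (via $T_{\underline{c}}(J)\subseteq J$ from \eqref{eq:separation}) and the sup/inf chain more explicitly than the paper does.
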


\begin{proof}
	Observe that for every compact intervals $A,B$, if $C\subset A$ and $D\subset B$ are compact intervals then $A\prec B$ implies $C\prec D$. Thus, the claim follows by $T_{\underline{a}}(J)\subset T_{\underline{a}'}(J)$ and $T_{\underline{b}}(J)\subset T_{\underline{b}'}(J)$.
\end{proof}

For the finite sequence $M^{(n)}=(M_1^{(n)},\ldots,M_{l_n}^{(n)})$ and $1\leq\ell\leq l_n-1$, let $\sigma^{\ell}M^{(n)}$ be the $l_n$-length word such that
$$
\sigma^{\ell}M^{(n)}=(M_{\ell+1}^{(n)},\ldots,M_{l_n}^{(n)},M_{1}^{(n)},\ldots,M_{\ell}^{(n)}),
$$
with the convention that $\sigma^{l_n}M^{(n)}=M^{(n)}$. Thus, $\sigma^\ell M^{(n)}$ can be defined for every $\ell\in\ZZ^+$ in a natural, periodic way.

\begin{lemma}\label{lem:fixgood}
	For every $n\geq3$ and $1\leq\ell\leq l_n-1$, $T_{M^{(n)}}(J)\prec T_{\sigma^{\ell}M^{(n)}}(J)$.
\end{lemma}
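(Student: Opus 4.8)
The plan is to argue by induction on $n$, reducing every case to the prefix‑domination principle of Lemma~\ref{lem:prefix} together with the separation \eqref{eq:separation} and Lemma~\ref{lem:propM}. First I would record the elementary mechanism behind all the comparisons: for finite words $u,v$ over $\{2,3\}$ of the same length, iterating \eqref{eq:separation} shows $T_u(J)$ and $T_v(J)$ are disjoint compact intervals whenever $u\neq v$, so $\prec$ linearly orders the sets $T_w(J)$ with $|w|=L$; moreover, if $T_u(J)\prec T_v(J)$ and $i$ is the first coordinate at which $u$ and $v$ differ, then already $T_{u_{[1,i]}}(J)\prec T_{v_{[1,i]}}(J)$, since $T_u(J)\subseteq T_{u_{[1,i]}}(J)$, $T_v(J)\subseteq T_{v_{[1,i]}}(J)$ and the two length‑$i$ cylinders are disjoint. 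Thus, to prove $T_{M^{(n)}}(J)\prec T_{\sigma^\ell M^{(n)}}(J)$ it suffices, by Lemma~\ref{lem:prefix}, to exhibit a prefix of $M^{(n)}$ and a prefix of $\sigma^\ell M^{(n)}$ ordered by $\prec$ in the desired way.

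For the base case $n=3$ the two nontrivial rotations of $M^{(3)}=(3,2,2)$ begin with $2$ while $M^{(3)}$ begins with $3$, and $T_3(J)\prec T_2(J)$ by a direct computation with $J=[\tfrac15-\delta,\tfrac25+\delta]$, so Lemma~\ref{lem:prefix} applies with one‑letter prefixes. For the inductive step I would write $M^{(n+1)}=M^{(n)}M^{(n-1)}M^{(n-1)}$ and use $M^{(n)}=M^{(n-1)}M^{(n-2)}M^{(n-2)}$, so that both $M^{(n+1)}$ and $M^{(n)}$ begin with $M^{(n-1)}$. If the shift lands on a block boundary, $\ell\in\{l_n,\,l_n+l_{n-1}\}$, then $\sigma^\ell M^{(n+1)}$ is $M^{(n-1)}M^{(n-1)}M^{(n)}$ or $M^{(n-1)}M^{(n)}M^{(n-1)}$, each beginning with $M^{(n-1)}M^{(n-1)}$, while $M^{(n+1)}$ begins with $M^{(n)}$; since $T_{M^{(n)}}(J)\prec T_{M^{(n-1)}M^{(n-1)}}(J)$ by Lemma~\ref{lem:propM}, Lemma~\ref{lem:prefix} settles these.

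Next, if $\ell$ lies strictly inside one of the three blocks $B\in\{M^{(n)},M^{(n-1)}\}$, I would write $B=XY$ with $X$ the part of $B$ before the cut, so that $\sigma^\ell M^{(n+1)}$ begins with $Y$, then the remaining blocks, then $X$, while $M^{(n+1)}$ begins with $B=XY$ (using that it begins with $M^{(n)}$, hence with $M^{(n-1)}$). By the inductive hypothesis $T_B(J)\prec T_{YX}(J)$, since $YX=\sigma^{|X|}B$; let $i$ be the first coordinate where $B$ and $YX$ differ. If $i\le|Y|$, then $B_{[1,i]}$ is a prefix of $M^{(n+1)}$, $(YX)_{[1,i]}=Y_{[1,i]}$ is a prefix of $\sigma^\ell M^{(n+1)}$, and $T_{B_{[1,i]}}(J)\prec T_{Y_{[1,i]}}(J)$ by the mechanism above, so Lemma~\ref{lem:prefix} finishes. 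The only remaining possibility is $i>|Y|$, which is exactly the assertion that $B$ is periodic with period $|X|$.

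This periodic case is the heart of the matter and the part I expect to be most delicate. Here I would first establish, by an auxiliary induction on the recursion $M^{(k)}=M^{(k-1)}M^{(k-2)}M^{(k-2)}$ (using \eqref{eq:separation}, equivalently the sparsity of occurrences of a fixed block $M^{(j)}$ inside $\underline M$, to rule out spurious borders), that the proper borders of $M^{(k)}$ are exactly $M^{(k-2)},M^{(k-4)},\dots$; consequently the periodic case occurs only with period $|B|-l_m$, where $M^{(m)}$ is a proper prefix of $B$, and the common part $Y$ above equals $M^{(m)}$. Writing $M^{(n+1)}$ and $\sigma^\ell M^{(n+1)}$ out explicitly from $B=M^{(m)}S$ (with $S$ the corresponding suffix of $B$) and from the recursion for $M^{(n-1)}$, I would then peel the two words apart past the common prefix, block by block via $M^{(k)}=M^{(k-1)}M^{(k-2)}M^{(k-2)}$, until reaching a comparison in which one side begins with $M^{(j)}$ and the other with $M^{(j-1)}M^{(j-1)}$ (or, at the bottom, a single $3$ against a single $2$); at that point Lemma~\ref{lem:propM} (respectively $T_3(J)\prec T_2(J)$) together with Lemma~\ref{lem:prefix} pins down the order — provided one keeps track of the parity of the length of the accumulated common prefix, since the maps $T_m$ are orientation reversing and an odd common prefix flips the inequality. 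The genuine work is verifying that this parity always comes out so that $M^{(n+1)}$ is the smaller side, i.e. that the border structure of $M^{(n)}$ meshes correctly with the alternation of orientations; everything outside this case is routine bookkeeping with Lemma~\ref{lem:prefix} and Lemma~\ref{lem:propM}.
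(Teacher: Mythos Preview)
Your overall plan—induction on $n$, with Lemma~\ref{lem:prefix} and Lemma~\ref{lem:propM} as the two engines—is the paper's plan, and your treatment of the block-boundary shifts $\ell\in\{l_n,\,l_n+l_{n-1}\}$ is the same. The divergence is in the interior shifts: you cut the block $B\in\{M^{(n)},M^{(n-1)}\}$ containing position $\ell$ as $B=XY$ and try to settle the comparison using only the first $|Y|$ letters of $\sigma^\ell M^{(n+1)}$; when the first mismatch between $B$ and $YX$ lies beyond position $|Y|$ you are forced into the border/parity analysis that you yourself flag as ``the genuine work'' and do not actually carry out. That is a real gap in the proposal as written.

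The paper avoids this case altogether by a sharper use of the recursion. The point is that in $M^{(n+1)}=M^{(n)}M^{(n-1)}M^{(n-1)}$ every block is followed by a block with the same initial segment, so the wrap-around part of a rotation of a shorter $M^{(k)}$ is supplied by the next block. Concretely, for $1\le\ell\le l_{n-1}$ the \emph{entire} rotation $\sigma^\ell M^{(n)}$—not just its suffix $Y$—is a prefix of $\sigma^\ell M^{(n+1)}$, because after exhausting the tail of $M^{(n)}$ the next $\ell$ letters of $M^{(n+1)}$ are the first $\ell$ letters of $M^{(n-1)}$, which coincide with $M_1^{(n)},\dots,M_\ell^{(n)}$. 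Lemma~\ref{lem:prefix} then applies with the length-$l_n$ prefixes $M^{(n)}$ and $\sigma^\ell M^{(n)}$, and the induction hypothesis for $n$ finishes the case with no periodicity issue. Rewriting $M^{(n+1)}$ as in \eqref{eq:forM1} and \eqref{eq:forM2} yields the analogous reduction for the remaining ranges of $\ell$ to rotations of $M^{(n-1)}$ or $M^{(n-2)}$, with the two further special values $\ell=l_{n-1}+l_{n-2}$ and $\ell=l_n$ producing prefixes $M^{(n-2)}M^{(n-2)}$ and $M^{(n-1)}M^{(n-1)}$ handled by Lemma~\ref{lem:propM}. Because the step appeals to the hypothesis for $n$, $n-1$, and $n-2$, the paper verifies the base cases $n=3$ and $n=4$ (both from the explicit ordering \eqref{eq:firstorder} of length-$3$ cylinders); with those in hand the induction closes without ever entering your periodic case.
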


\begin{proof}
	Simple calculations show that
	\begin{equation}\label{eq:firstorder}
	T_{3,2,2}(J)\prec T_{3,3,3}(J)\prec T_{3,3,2}(J)\prec T_{2,2,3}(J)\prec T_{2,3,3}(J)\prec T_{2,3,2}(J).
	\end{equation}
	Clearly for $M^{(3)}=(3,2,2)$, we have $\sigma^1M^{(3)}=(2,2,3)$ and $\sigma^2M^{(3)}=(2,3,2)$, and for $M^{(4)}=M^{(3)}M^{(2)}M^{(2)}=(3,2,2,3,3)$ we have
	$$
	\sigma^1M^{(4)}=(2,2,3,3,3),\ \sigma^2M^{(4)}=(2,3,3,3,2),\ \sigma^3M^{(4)}=(3,3,3,2,2), \ \sigma^4M^{(4)}=(3,3,2,2,3).
	$$
	Thus, by Lemma~\ref{lem:prefix} and \eqref{eq:firstorder}, the claim follows for $n=3$ and $n=4$.
	
	Let us prove the rest by induction. So suppose that the claim holds for $n\ge4$.
	
	First, assume that $1\leq \ell\leq l_{n-1}$. Then by
	\begin{equation}\label{eq:forM1}
	M^{(n+1)}=M^{(n-1)}M^{(n-2)}M^{(n-2)}M^{(n-1)}M^{(n-1)}
	\end{equation}
	and $M^{(n)}=M^{(n-1)}M^{(n-2)}M^{(n-2)}$, we get that $M_{k}^{(n+1)}=M^{(n-1)}_{k}=M^{(n)}_k$ for every $1\leq k\leq l_{n-1}$, and so $\sigma^{\ell}M^{(n)}$ is a prefix of $\sigma^{\ell}M^{(n+1)}$. Since $M^{(n)}$ is a prefix of $M^{(n+1)}$, we get by the induction hypothesis $T_{M^{(n)}}(J)\prec T_{\sigma^{\ell}M^{(n)}}(J)$ that $T_{M^{(n+1)}}(J)\prec T_{\sigma^{\ell}M^{(n+1)}}(J)$ by Lemma~\ref{lem:prefix}.
	
	Now, assume that $l_{n-1}<\ell< l_n$ but $\ell\neq l_{n-1}+l_{n-2}$. Then by
	\begin{equation}\label{eq:forM2}
	M^{(n+1)}=M^{(n-1)}M^{(n-2)}M^{(n-2)}M^{(n-2)}M^{(n-3)}M^{(n-3)}M^{(n-1)},
	\end{equation}
	we get that $\sigma^{\ell-l_{n-1}}M^{(n-2)}$ is a prefix of $\sigma^{\ell}M^{(n+1)}$. Hence, again by the fact that $M^{(n-2)}$ is a prefix of $M^{(n+1)}$ and the assumption that $T_{M^{(n-2)}}(J)\prec T_{\sigma^{k}M^{(n-2)}}(J)$ for every $k\notin\{l_{n-2},2l_{n-2},\ldots\}$, the claim follows by Lemma~\ref{lem:prefix}.
	
	If $\ell=l_{n-1}+l_{n-2}$ then by \eqref{eq:forM2}, we get that $M^{(n-2)}M^{(n-2)}$ is a prefix of $\sigma^\ell M^{(n+1)}$, meanwhile, if $\ell=l_{n-1}+2l_{n-2}=l_n$ then by \eqref{eq:forM1}, we get that $M^{(n-1)}M^{(n-1)}$ is a prefix of $\sigma^\ell M^{(n+1)}$, thus, the claim follows by Lemma~\ref{lem:propM}.
	
	Now, suppose that $l_n<\ell<l_n+l_{n-1}$ then by \eqref{eq:defM} we get that $\sigma^{\ell-l_n}M^{(n-1)}$ is the prefix of $\sigma^{\ell}M^{(n+1)}$, and since $M^{(n-1)}$ is a prefix of $M^{(n+1)}$, the claim follows by Lemma~\ref{lem:prefix} and the induction hypothesis.
	
	If $\ell=l_n+l_{n-1}$ then by \eqref{eq:defM} $M^{(n-1)}M^{(n-1)}$ is a prefix of $\sigma^\ell M^{(n+1)}$, thus, the claim again follows by Lemma~\ref{lem:propM}.
	
	Finally, if $l_n+l_{n-1}<\ell<l_n+2l_{n-1}=l_{n+1}$ then by \eqref{eq:defM}, $\sigma^{\ell-l_n-l_{n-1}}M^{(n-1)}$ is a prefix of $\sigma^\ell M^{(n+1)}$, so the claim follows again by the induction hypothesis and Lemma~\ref{lem:prefix}.
\end{proof}

\begin{proof}[Proof of Proposition~\ref{prop:lambdainL}]
	For $n=1$ and $n=2$, let
	\[
	\underline{m}^{(1)}=(2,2,\ldots)\text{ and }\underline{m}^{(2)}=(3,3,\ldots).
	\]
	Since the maps $T_m$ are contractions, we get
	$$
	\liminf_{k\to\infty}T_{m_{[k,1]}^{(n)}}(0)=\lim_{k\to\infty}T_{m_{[k,1]}^{(n)}}(0)=\lambda_n,
	$$
	and so, $\{\lambda_1,\lambda_2\}\subset\mathcal{L}$.
	
	For every integer $n\geq3$, let us define the following sequence:
	\[
	\underline{m}^{(n)}=(M_{l_n}^{(n)},\ldots,M_1^{(n)},M_{l_n}^{(n)},\ldots,M_1^{(n)},\ldots).
	\]
	Then by Lemma~\ref{lem:convergence}, $T_{m_{[k,1]}^{(n)}}(0)\in\bigcup_{\ell=0}^{l_n-1}T_{\sigma^\ell M^{(n)}}(J)$ for every sufficiently large $k$. Since the maps $T_m$ are contractions, we get
	$$
	\lim_{k\to\infty}T_{m_{[kl_n,1]}^{(n)}}(0)=\lambda_n\in T_{M^{(n)}}(J),
	$$
	furthermore, by Lemma~\ref{lem:fixgood}, $T_{M^{(n)}}(J)\prec T_{\sigma^\ell M^{(n)}}(J)$ for every $1\leq\ell\leq l_n-1$ and so
	$$
	\lambda_n<T_{m_{[k,1]}^{(n)}}(0)\text{ for every }k\notin\{l_n,2l_n,\ldots\}.
	$$
	Hence, $\liminf_{k\to\infty}T_{m_{[k,1]}^{(n)}}(0)=\lambda_n$.
	
	The last claim follows by Theorem~\ref{thm5} and the fact that $\lambda_n$ converges to $\lambda_0$ as $n\to\infty$.
\end{proof}

\begin{proposition}\label{prop:nobetween}
	$(\lambda_1,\infty)\cap\mathcal{L}=\emptyset$, and for every $n\geq1$, $(\lambda_{n+1},\lambda_n)\cap\mathcal{L}=\emptyset$.
\end{proposition}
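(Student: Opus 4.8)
The plan is to treat the two assertions separately; in each I rely on the cylinder geometry of the IFS $\{T_2,T_3\}$ described in Lemma~\ref{lem:propM}, combined with the two halves of Lemma~\ref{lem:oddeven} (an odd-length block occurring along the realizing subsequence bounds the $\liminf$ from above, while membership of $x$ in an even-length cylinder bounds it from below). The first assertion I would prove directly. Here $\lambda_1=\mathrm{Fix}(T_2)=\frac13$, and for any $(m_i)\in\ZZ_{\ge2}^{\ZZ^+}$ either $m_i\ge3$ for infinitely many $i$, or $m_i=2$ for all sufficiently large $i$. In the first case Lemma~\ref{lemma0b}\eqref{(vii)} with $K=3$ gives $\liminf_k T_{m_{[k,1]}}(0)\le\frac14$; in the second, since $T_2$ is a contraction with fixed point $\frac13$, the sequence $T_{m_{[k,1]}}(0)$ converges to $\frac13$. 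Either way $\liminf_k T_{m_{[k,1]}}(0)\le\lambda_1$, so $\mathcal{L}\cap(\lambda_1,\infty)=\emptyset$.

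For the second assertion, fix $n\ge1$ and suppose for contradiction that $x=\liminf_k T_{m_{[k,1]}}(0)$ lies in $(\lambda_{n+1},\lambda_n)$. Since $\lambda_0=0.2293\ldots>\frac15$, $\lambda_1=\frac13<\frac25$ and $\lambda_0<\lambda_{n+1}<\lambda_n\le\lambda_1$, we have $x\in\mathcal{L}\cap(\frac15,\frac25)$, hence $x\in\Lambda$ by \eqref{eq:containment2}, and, as observed at the beginning of this section, we may assume $(m_i)\in\{2,3\}^{\ZZ^+}$. Applying Lemma~\ref{lem:propM} with index $n+1$, the cylinders satisfy $T_{M^{(n+1)}}(J)\prec T_{M^{(n)}M^{(n)}}(J)$ with no point of $\Lambda$ strictly between them, while $\lambda_{n+1}=\mathrm{Fix}(T_{M^{(n+1)}})\in T_{M^{(n+1)}}(J)$ and $\lambda_n=\mathrm{Fix}(T_{M^{(n)}})=\mathrm{Fix}(T_{M^{(n)}M^{(n)}})\in T_{M^{(n)}M^{(n)}}(J)$. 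Consequently $x$, being a point of $\Lambda$ lying in $(\lambda_{n+1},\lambda_n)$, must lie in $T_{M^{(n+1)}}(\Lambda)$ or in $T_{M^{(n)}M^{(n)}}(\Lambda)$; here I use that $\Lambda\cap T_w(J)=T_w(\Lambda)$ for every finite word $w$ over $\{2,3\}$, which follows from the separation \eqref{eq:separation}.

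It remains to exclude both alternatives. If $x\in T_{M^{(n)}M^{(n)}}(\Lambda)\subseteq T_{M^{(n)}M^{(n)}}([\frac15,\frac25])$, then, $M^{(n)}M^{(n)}$ being a word of even length $2l_n$ over $\{2,3\}$, Lemma~\ref{lem:oddeven}\eqref{it:even} yields $x\ge\mathrm{Fix}(T_{M^{(n)}M^{(n)}})=\lambda_n$, contradicting $x<\lambda_n$. If $x\in T_{M^{(n+1)}}(\Lambda)$, pick $k_j\to\infty$ with $T_{m_{[k_j,1]}}(0)\to x$. Using \eqref{eq:separation} and Lemma~\ref{lem:convergence} one checks that for every large $k$ the point $T_{m_{[k,1]}}(0)$ lies in the unique level-$l_{n+1}$ cylinder $T_{m_k}\circ\cdots\circ T_{m_{k-l_{n+1}+1}}(J)$, and since $x$ belongs to the interior of $T_{M^{(n+1)}}(J)$ (because $x\in T_{M^{(n+1)}}(\Lambda)$ and $\Lambda$ lies in the interior of $J$), it follows that $(m_{k_j},m_{k_j-1},\ldots,m_{k_j-l_{n+1}+1})=(M^{(n+1)}_1,M^{(n+1)}_2,\ldots,M^{(n+1)}_{l_{n+1}})$ for all large $j$. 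As $l_{n+1}$ is odd, Lemma~\ref{lem:oddeven}\eqref{it:odd} then forces $x\le\mathrm{Fix}(T_{M^{(n+1)}})=\lambda_{n+1}$, contradicting $x>\lambda_{n+1}$. Hence $(\lambda_{n+1},\lambda_n)\cap\mathcal{L}=\emptyset$.

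The step I expect to be the main obstacle is this last one: translating the geometric fact that $x$ lies in the level-$l_{n+1}$ cylinder $T_{M^{(n+1)}}(J)$ into the combinatorial fact that the finite block $M^{(n+1)}$, read backwards, appears infinitely often along the subsequence realizing $x$, which is precisely the hypothesis of Lemma~\ref{lem:oddeven}\eqref{it:odd}. This relies on the uniform separation of cylinders \eqref{eq:separation} and on the eventual confinement of the orbit $T_{m_{[k,1]}}(0)$ within the level-$l_{n+1}$ cylinder structure provided by Lemma~\ref{lem:convergence}; the remaining steps are routine combinations of the lemmas already proved.
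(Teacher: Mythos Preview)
Your proof is correct and follows essentially the same approach as the paper's. For the first assertion the paper is slightly slicker: it applies Lemma~\ref{lemma0b}\eqref{(vii)} directly with $K=2$ (the hypothesis $m_i\ge2$ infinitely often is automatic), yielding $\liminf_k T_{m_{[k,1]}}(0)\le\frac13=\lambda_1$ in one stroke, whereas you split into two cases; but your argument is fine. For the second assertion you reproduce the paper's dichotomy exactly, and you are in fact more explicit than the paper at the point you flagged as the main obstacle: the paper simply writes ``if $x\in T_{M^{(n+1)}}(J)$ then $x\le\lambda_{n+1}$ by Lemma~\ref{lem:oddeven}\eqref{it:odd}'' without spelling out why membership in the cylinder forces the block $M^{(n+1)}$ to recur along a subsequence realizing $x$, whereas you supply that justification via \eqref{eq:separation} and Lemma~\ref{lem:convergence}.
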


\begin{proof}
	First, observe that $\max\mathcal{L}=\lambda_1$. Indeed, this follows by Lemma~\ref{lemma0b}\eqref{(vii)} with $K=2$, which implies the first claim.
	
	Let us show that $(\lambda_{n+1},\lambda_n)\cap\mathcal{L}=\emptyset$. Contrary, let us assume that there exists an integer $n\geq1$ and $x\in(\lambda_{n+1},\lambda_n)\cap\mathcal{L}$.
	
	Since $\lambda_n$ is the fixed point of $T_{M^{(n)}}$, we have $\lambda_n\in T_{M^{(n)}M^{(n)}}(J)\subset T_{M^{(n)}}(J)$. Since $\mathcal{L}\subset\Lambda$, where $\Lambda$ is the attractor of $\{T_2,T_3\}$, and $\mathrm{mid}(T_{M^{(n+1)}}(J),T_{M^{(n)}M^{(n)}}(J))\subset(\lambda_{n+1},\lambda_n)$, we get by Lemma~\ref{lem:propM} that either $x\in T_{M^{(n)}M^{(n)}}(J)$ or $x\in T_{M^{(n+1)}}(J)$. But by Lemma~\ref{lem:oddeven}\eqref{it:even}, if $x\in T_{M^{(n)}M^{(n)}}(J)$ then $x\geq\lambda_n$, while if $x\in T_{M^{(n+1)}}(J)$ then $x\leq\lambda_{n+1}$ by Lemma~\ref{lem:oddeven}\eqref{it:odd}, which is a contradiction.
\end{proof}

\begin{proof}[Proof of Theorem~\ref{thm1}]
	The statement follows by combining Proposition~\ref{prop:lambdainL} and Proposition~\ref{prop:nobetween}.
\end{proof}

\section{\bf Proof of Theorem \ref{thm3}}

\begin{proof}	
	It follows from Lemma \ref{lemma0b}\eqref{(vii)} that if $m_n\ge 5$ for infinitely many $n$, then $\liminf _{k\to \infty }T_{m_{[k,1]}}(0)$ $\le \frac{1}{6}$, thus we may assume without loss of generality that $(m_i)\in\{2,3,4\}^{\ZZ^+}$.
	
	If $(m_k,m_{k-1})=(4,2)$ for infinitely many $k$, then  $(m_k,m_{k-1},m_{k-2})=(4,2,a)$ for an $a\in\{2,3,4\}$ and for infinitely many $k$. By Lemma~\ref{lem:oddeven}\eqref{it:odd}
	$$
	\liminf _{k\to \infty }T^{(1)}_{m_{[k,1]}}(0)\le\max\{\mathrm{Fix}(T_4\circ T_2\circ T_a):a\in\{2,3,4\}\}=\frac{3}{17},$$
	so we may assume that $(m_k,m_{k-1})\neq(4,2)$ for every $k$. Thus,
	$$
	\mathcal{L}\cap\left(\frac{3}{17},\frac{1}{3}\right]\subset\{\liminf_{n\to\infty}T_{m_{[n,1]}}(0):m_i\in\{2,3,4\},(m_i,m_{i-1})\neq(4,2)\}=:S.
	$$
	If $m_i\in \{ 2,3,4\}$ and $(m_i,m_{i-1})\ne (4,2)$, then there are 55 possibilities for $(m_i,m_{i-1},m_{i-2},m_{i-3})$:
	\begin{align*}
	A=\{&\text{$(2,2,2,2)$, $(2,2,2,3)$, $(2,2,2,4)$, $(2,2,3,2)$, $(2,2,3,3)$, $(2,2,3,4)$, $(2,2,4,3)$, $(2,2,4,4)$,}\\
	&\text{$(2,3,2,2)$, $(2,3,2,3)$, $(2,3,2,4)$, $(2,3,3,2)$, $(2,3,3,3)$, $(2,3,3,4)$, $(2,3,4,3)$, $(2,3,4,4)$,}\\
	&\text{$(2,4,3,2)$, $(2,4,3,3)$, $(2,4,3,4)$, $(2,4,4,3)$, $(2,4,4,4)$, $(3,2,2,2)$, $(3,2,2,3)$, $(3,2,2,4)$,}\\
	&\text{$(3,2,3,2)$, $(3,2,3,3)$,$(3,2,3,4)$, $(3,2,4,3)$, $(3,2,4,4)$, $(3,3,2,2)$, $(3,3,2,3)$, $(3,3,2,4)$,}\\
	&\text{$(3,3,3,2)$, $(3,3,3,3)$, $(3,3,3,4)$, $(3,3,4,3)$ $(3,3,4,4)$, $(3,4,3,2)$, $(3,4,3,3)$, $(3,4,3,4)$,}\\
	&\text{$(3,4,4,3)$,  $(3,4,4,4)$, $(4,3,2,2)$, $(4,3,2,3)$, $(4,3,2,4)$, $(4,3,3,2)$, $(4,3,3,3)$, $(4,3,3,4)$,}\\
	&\text{$(4,3,4,3)$, $(4,3,4,4)$, $(4,4,3,2)$, $(4,4,3,3)$, $(4,4,3,4)$, $(4,4,4,3)$, $(4,4,4,4)$}\}.
	\end{align*}
	Let us consider the IFS $\Phi=\{T_a\circ T_b\circ T_c\circ T_d|(a,b,c,d)\in A\}$ and let $\Lambda'$ be the attractor of $\Phi$. Then by Lemma~\ref{lem:convergence}, $S\subset\Lambda'$. Moreover, it is easy to check that the sum of the 55 contractions strictly less than 1, therefore by Lemma~\ref{lem:zeromeasure}, $\lambda (\Lambda')=0$, and so $\lambda (\mathcal{L} \cap [\frac{3}{17},\frac{1}{3}])=0$.
\end{proof}



\begin{thebibliography}{99}

\bibitem{ChenFang} Y.G. Chen, J.H. Fang, On additive complements. II. {\it Proc. Amer. Math. Soc.} {\bf 139} (2011), 881-883.

\bibitem{CuFla} T. W. Cusick and M. E. Flahive, {\it The Markoff and Lagrange spectra.} Mathematical Surveys and Monographs, 30. American Mathematical Society, Providence, RI, 1989.

\bibitem{Lima} D. Lima, C. Matheus, C.G. Moreira, S. Vieira, $M\setminus L$ is not closed. {\it Int. Math. Res. Not. IMRN} {\bf 2022}(1) (2022), 265-311.

\bibitem{Falconer} K. Falconer, {\it Fractal geometry.} Mathematical foundations and applications. Third edition. John Wiley \& Sons, Ltd., Chichester, 2014.

\bibitem{FangSandor} J.H. Fang, C. S\'{a}ndor, On sets with sum and difference structure. arXiv:2205.06553.

\bibitem{Freiman} G.A. Freiman, Non-coincidence of the spectra of Markov and of Lagrange. (Russian) {\it Mat. Zametki} {\bf 3} (1968), 195-200.

\bibitem{Hurwitz} A. Hurwitz, \"{U}ber die angen\"{a}herte Darstellung der Irrationalzahlen dureh rationale Br\"{u}che. {\it Math. Ann.} {\bf 39} (1891), 279-284.

\bibitem{Hut} J. E. Hutchinson, Fractals and self-similarity. {\it Indiana Univ. Math. J.} {\bf 30}(5) (1981), 713-747.

\bibitem{Khin} A. Y. Khinchin, {\it Continued fractions.} With a preface by B. V. Gnedenko. Translated from the third (1961) Russian edition. Reprint of the 1964 translation. Dover Publications, Inc., Mineola, NY, 1997.

\bibitem{LiuFang}  X.Y. Liu, J.H. Fang, A note on additive complements. {\it Adv. Math. (China)} {\bf 45} (2016), 533-536.

\bibitem{Markoff} A. Markov, Sur les formes quadratiques binaires ind\'{e}finies. {\it Math. Ann.} {\bf 15} (1879), 381-406.

\bibitem{Ma} F.Y. Ma, A note on additive complements. arXiv:2205.04128.

\bibitem{Moreira} C. G. Moreira, Geometric properties of the Markov and Lagrange spectra. {\it Annals Math.} {\bf 188}(1) (2018), 145-170.

\bibitem{Parkkonen} J. Parkkonen, F. Paulin, On the closedness of approximation spectra. {\it J. Th\'{e}or. Nombres Bordeaux} {\bf 21} (2009), 701-710.

\bibitem{Perron} O. Perron, \"{U}ber diophantische Approximationen. {\it Math. Ann.} {\bf 83} (1921), 77-84.

\end{thebibliography}
\end{document}